\DeclareFontFamily{OT1}{rsfs}{}
\DeclareFontShape{OT1}{rsfs}{n}{it}{<-> rsfs10}{}
\DeclareMathAlphabet{\curly}{OT1}{rsfs}{n}{it}
\newcommand{\eqnum}{\refstepcounter{equation}\textup{\tagform@{\theequation}}}
\makeatletter \@addtoreset{equation}{section} \makeatother
\renewcommand{\theequation}{\thesection.\arabic{equation}}
\numberwithin{equation}{subsection}
\theoremstyle{definition}
\newtheorem{dfn}[equation]{Definition}
\newtheorem{lem}[equation]{Lemma}
\newtheorem{cor}[equation]{Corollary}
\newtheorem{prop}[equation]{Proposition}
\newtheorem{rmk}[equation]{Remark}
\newtheorem*{prop*}{Proposition}
\newcommand\rurl[1]{
	\href{http://#1}{\nolinkurl{#1}}
}
\newtheorem{claim}[equation]{Claim}
\newcommand{\E}{\mathcal{E}}
\newcommand{\coker}{\mathrm{coker}}
\newcommand{\EE}{\curly{E}}
\newcommand{\strE}{\mathsf{E}}
\newcommand{\tr}{\mathrm{tr}}
\newcommand{\rk}{\mathrm{rank}}
\newcommand{\Spec}{\textbf{Spec}}
\newcommand{\id}{\mathrm{id}}
\newcommand{\C}{\textbf{C}}
\newcommand{\R}{\textbf{R}}
\newcommand{\OO}{\curly{O}}
\newcommand{\Endd}{\curly{E}\!\mathit{nd}}
\newcommand{\Hom}{\mathrm{Hom}}
\newcommand{\Homm}{\curly{H}\! \mathit{om}}
\newcommand{\RHomm}{\mathbf{R} \curly{H}om}
\newcommand{\Extt}{\curly{E} \! \mathit{xt}}
\newcommand{\Ext}{\mathrm{Ext}}
\newcommand{\ev}{\mathrm{ev}}
\newcommand{\ch}{\mathrm{ch}}
\newcommand{\F}{\mathcal{F}}
\newcommand{\At}{\mathrm{At}}
\newcommand{\N}{\curly{N}}
\newcommand{\so}{\mathfrak{so}}
\newcommand{\spp}{\mathfrak{sp}}
\begin{document}
\title{A virtual structure for symplectic Higgs bundles}
\author{Simon Schirren\\}
	\maketitle
    
	\tableofcontents
    \textbf{Abstract.} We define a perfect obstruction theory for a moduli of symplectic Higgs sheaves $(E,\phi)$ on projective surfaces $S$. Key to this is a minimality assumption on $\ch(E)$ that forces all $E$ to be locally free. This might have implications to define a virtual count and $Sp(r)$-Vafa-Witten invariants.
\section{Introduction}
In \cite{VW}, Vafa and Witten discuss solutions to the supersymmetric Yang-Mills equations and explain relations to the Euler characteristic of the instanton moduli space of sheaves on a real 4-manifold, underlying a smooth complex surface $S$. A mathematical definition for such a moduli space of \textit{Higgs sheaves} on $S$ and an enumerative geometry yielding \textit{Vafa-Witten invariants} has been proposed by Tanaka and Thomas \cite{TT1} for the gauge groups $U(r)$ and $SU(r)$.\\
Fixing invariants $\ch(E)$, Higgs sheaves $(E,\phi)$ - where $E$ is a torsion-free sheaf on $S$ of $\rk(E)=r$ and $\phi \in \Hom(E,E\otimes K_S)$ - appear in the stable locus $\N\subset T^*[-1]\curly{M}$ inside the $(-1)$-shifted cotangent bundle to the moduli stack of torsion-free sheaves $\curly{M}$.\\
Via spectral theory, $\N$ can equivalently be described as a moduli space for stable torsion sheaves $\E_\phi$ on $X=\mathrm{Tot}(K_S)\xrightarrow{\pi} S$, a non-compact Calabi-Yau threefold. $\N$ admits a \textit{virtual cotangent bundle} $V^{\bullet}=[V^{-1} \rightarrow V^0]$ that admits a $(-1)$-\textit{shifted symplectic structure} $$\theta: V^\bullet \rightarrow V^{\bullet,\vee}[1].$$
The corresponding Donaldson-Thomas type invariant comes from the associated symmetric $U(r)$-perfect obstruction theory on $\N$, $$\psi: V^\bullet \rightarrow \mathbf{L}_\N.$$
In order to define a \textit{virtual count}, one uses its natural $\C^\times$-action $$(E,\phi) \mapsto (E,\lambda\phi)$$ and does equivariant localization to define the $U(r)$-Vafa-Witten invariant 
as in \cite[3.18]{TT1}.\\
Unfortunately, this invariant turns out to be zero in many cases, so one has to modify the moduli space, which is roughly done as follows: Taking determinants $$ E \mapsto \det(E),$$
we get a map
\begin{equation}\label{shifteddiff}
\begin{tikzcd}[column sep=2cm] 
\N \arrow[r] \arrow{r}{(\det\pi_*,\tr)}& \mathbf{Pic}(S) \times \Gamma(K_S) 
\end{tikzcd}
\end{equation}
given by $$(E,\phi) \rightarrow (\det(E),\tr(\phi)).$$
As $\mathbf{Pic}(S)$ has trivial tangent bundle and $H^1(\OO_S)^*\cong H^1(K_S)$, one may think of \ref{shifteddiff} as an instance of a map
\begin{equation*}
\begin{tikzcd}[column sep=1cm] 
\mathbf{T}^*[-1]\curly{M} \arrow{r}& T^*[-1]\mathbf{Pic}(S),
\end{tikzcd}
\end{equation*}
which, unfortunately, we have not yet succeeded in defining properly.\\
The sub-moduli space $\N_{SU(r)}\subset \N$ relative $\mathbf{Pic}(S)\times\Gamma(K_S)$ admits again a symmetric obstruction theory and defines a better invariant via $\C^\times$-localization, which gives access to more surfaces $S$, see \cite[6]{TT1}. This invariant is given by
$$\mathsf{VW}_{\N_{SU(r)}}=\int_{[\N_{SU(r)}^{\C^\times}]^{vir}} \frac{1}{e(\nu^{vir})} \in \mathbf{Q}$$
Here, the denominator is the Euler class of the virtual normal bundle $\nu^{vir}$ to the fixed locus $\N^{\C^\times} \subset \N$.\\
In other words, this is an enumerative theory for Higgs bundles of gauge group $SL(r)$, i.e. those pairs $(E,\phi)$ of $\N$, where $\det(E)\cong \mathcal{L}$ is fixed and $\tr(\phi)=0$. This has been computed by many authors, see e.g. the work of \cite{GK} and \cite{GKL} and for Enrique surfaces, recent work of \cite{O}. \\
In view of $S$-duality, we interested in defining similar invariants for Higgs sheaves of other gauge groups, such as the orthosymplectic ones $O(r)$ and $Sp(r)$. 
\subsection{Outline of the paper}
The idea is that bundles $E$ of $\curly{M}$ of gauge group $O(r)$ or $Sp(r)$ can be expressed as fixed points of an involution $$E \mapsto E^*.$$
The involution gives an action on the deformation-obstruction complex at each point, $$\mathbf{R}\Gamma(\Homm(E,E)[1])\rightarrow \mathbf{R}\Gamma(\Homm(E^*,E^*)[1]),$$
given by $$\phi \mapsto -\phi^*.$$ 
Over $\N\subset T^*[-1]\curly{M}$, we get an action on stable sheaves given by $$\iota: (E,\phi)\mapsto (E^*,-\phi^*).$$
Fixed pairs of $\N^\iota \subset \N$ under this action are given by triples $(E,\phi,f)$, with a non-degenerate pairing $$f:E\xrightarrow{\sim} E^*$$ intertwining $\phi$ with $-\phi^*$, i.e. there is a commutative square
\begin{center}
        \begin{tikzcd}
            E \arrow[r,"f"]\arrow[d,"\phi"] &  E^* \arrow[d,"-\phi^*"] \\
            E\otimes K_S \arrow[r,"f\otimes1"] & E^*\otimes K_S.
        \end{tikzcd}
    \end{center}
It turns out (using that stable sheaves are simple) that $f$ is \textit{either} symmetric $f^*=f$ and $E$ is an $O(r)$-bundle together with a section $\phi$ of $\so(E)\otimes K_S$ \textit{or} $f^*=-f$ is symplectic and $E$ is an $Sp(r)$-bundle together with a section $\phi$ of $\spp(E)\otimes K_S$. Accordingly, in Sec. \ref{fixedlocus} we will show that we get under certain assumptions two components $\N_{O(r)}$ and $\N_{Sp(r)}$ of $\N^\iota$.\\
To address the virtual structure, we need to phrase this action in terms of spectral sheaves $\E_\phi$ on $X$, which is done in Sec. \ref{spectral}. The deformation-obstruction theory of spectral sheaves is \textit{symmetric}
$$\Ext^1(\E_\phi,\E_\phi)\cong \Ext^2(\E_\phi,\E_\phi)^*,$$
and we want to lift $\iota$ to an action on the complex $$\iota_*: \R\Gamma(\Homm(\E_{\phi},\E_{\phi})[1]) \rightarrow \R\Gamma(\Homm(\E_{-\phi^*},\E_{-\phi^*})[1]).$$
This will be done directly in families of spectral families $\EE$, where will get an action of $\iota$ on $$\mathbf{T}_\N^{vir}=\tau^{[0,1]}\RHomm_{p_X}(\EE,\EE)[1],$$ compatible with the natural action on $\mathbf{T}_\N=\mathbf{L}_\N^\vee$ via the Atiyah class $\At_{\EE,\N}$, see Sec. \ref{virtualdiff}.\\
\subsection{Main result}
This is enough to meet the requirements of equivariant localization in the sense of \cite{GP}, where we replace $\C^\times$ by $\langle \iota \rangle$. The final part is analogous to \cite{Sch}, where we use the $U(r)$-obstruction theory of \cite{TT1}, which is now $\iota$-equivariant and localize it to $\N_{Sp(r)}\subset \N^\iota$. This allows us to set up a perfect obstruction theory on $\N_{Sp(r)}$, which endows it with a virtual cycle $[\N_{Sp(r)}]^{vir}$, see Prop. \ref{finalthm}.\\
Key to all of this is an assumption on $c_2(E)$ when fixing $\ch(E)$ that assures that all pairs of $\N$ are in fact locally free. Only this will make $(E,\phi) \mapsto (E^*,-\phi^*)$ into a globally well-defined involution, see \ref{moduli}.
\subsection{Outlook}
We are interested in generalizations to arbitrary $\ch(E)$ that would include torsion-free sheaves as well. A different approach that we would like to compare with involves replacing $\N$ by a moduli space of complexes and a different stability condition; see recent work of \cite{B}.\\
Furthermore, we would like to show that the virtual structure on $\N_{Sp(r)}$ is again symmetric, so $[\N_{Sp(r)}]^{vir}$ is degree $0$ and we can set up new Vafa-Witten invariants using torus localization as in \cite{TT1}.\\
Finally, we remark that although under certain assumptions both $O(r)$ and $Sp(r)$ appear in $\N$, we restrict to $Sp(r)$ for this discussion. There are some stability issues with $O(r)$ and as $Sp(2)\cong SL(2)$, $\N_{Sp(2)}$ has already a virtual theory as discussed in \cite{TT1} and \cite{Sch}.
\subsection{Notation}
The bounded derived category of $\N$ is denoted as $\mathbf{D}^b(\N)$. 
If $\N$ is endowed with a group action $G$, we denote the bounded derived category of $G$-\textit{equivariant sheaves} as $\mathbf{D}^b(\N)^G$, see e.g. \cite{R} for an introduction to equivariant categories.\\ 
Furthermore, we use the notation $$\RHomm_{p_X}(\EE,\EE):= \mathbf{R}{p_{X,*}}(\mathbf{R}\Homm(\EE,\EE))\in \mathbf{D}^b(\N),$$
with canonical projection $ p_X: X\times \N \rightarrow \N$. Notation will be simplified whenever possible: For example, we will use for the map $\pi: X\rightarrow S$ and its base change $ X\times \N \rightarrow S \times \N$ the same notation. For the canonical line bundle $K_S$ on $S$, we use the notation $\pi^*K_S$ for the sheaf on $X$ and its pullback to $X\times \N$.
\subsubsection{Duals}
For coherent $\F$, we define its dual as $\F^*:= \Homm(\F,\OO)$. If $V^\bullet$ is a complex of coherent sheaves, we denote by $V^{\bullet,\vee}$ the dual complex. Furthermore, for spectral sheaves $\E_\phi$ on $X$, we have duals $\E_\phi^D:= \Extt^1(\E_\phi,\pi^*K_S^{-1})$ which we define similarly for families $\EE$. 
\subsection{Acknowledgments}
Special thanks to Richard Thomas for his ideas and encouragement during early stages of this project.\\
Furthermore, we want to thank Tasuki Kinjo, Woonam Lim and Wiktor Vacca for reading and helpful comments and Martijn Kool and Georg Oberdieck for inspiring discussions around the topic.

\section{Higgs pairs}    
\subsection{Higgs pairs}
   We fix a smooth projective surface $S$ over $\mathbf{C}$ with polarization $\OO_S(1)$. A Higgs pair $$(E,\phi) \; \text{on} \; S $$ is a coherent sheaf $E$ together with a map $$\phi \in \Gamma(\Endd(E)\otimes K_S)=\Hom(E,E\otimes K_S).$$
   If $X:=\mathrm{Tot}(K_S) \xrightarrow{\pi} S$ denotes the total space of the canonical bundle, the spectral correspondence $$\mathbf{Higgs}(S)\cong \mathbf{Coh}_c(S),$$ identifies pairs $(E,\phi)$ with their spectral sheaves $$\E_\phi \; \text{on} \; X,$$ which are compactly supported sheaves on $X$.\\
   Higgs pairs $(E,\phi)$ form an abelian category $\mathbf{Higgs}(S)$, where a morphism $$f:(E,\phi) \rightarrow (E',\phi')$$
   is a morphism $$f:E\rightarrow E'$$ that intertwines $\phi$ with $\phi'$, i.e. $$f\phi=\phi'f.$$
   Given $(E,\phi)$, we get $\E_\phi$ as the sheaf of eigenspaces of $\phi$ acting on $E$, supported on the corresponding eigenvalues in $K_S$.\\ 
   And given $\E_\phi$, we recover $$(E,\phi)=(\pi_*\E_\phi,\pi_*(\tau\cdot \id)),$$ where $\tau\cdot \id$ is the tautological endomorphism on $X$.\\ We refer to \cite[2.2]{TT1} for the details of the spectral construction.
\subsection{Resolutions}
There is a natural resolution of $\E_\phi$ in terms of $(E,\phi)$, see \cite[2.11.]{TT1}. Namely, if $\tau$ denotes the tautological section of $\pi^*K_S$, this is given by 
\begin{equation}
    \begin{tikzcd} [column sep=large]
        0 \arrow[r] & \pi^*E \otimes K_S^{-1} \arrow[r, "\pi^*\phi-\tau\cdot \id"]& \pi^*E \arrow[r,"ev"]& \E_\phi \arrow[r] & 0
    \end{tikzcd}
\end{equation}
\begin{rmk}
An important observation is that if $E$ is a bundle, $\E_\phi$ has \textit{projective dimension} $\mathrm{pd}(\E_\phi)$ equal to $1$. 
\end{rmk}
\subsection{Different gauge groups}
We start with some preliminaries:\\
For a locally free sheaf $E$ with dual $E^*=\Homm(E,\OO_S)$, we have a natural isomorphism $E^*\otimes E^* \cong \Homm(E,E^*)$ and taking duals $f \mapsto f^*$ defines an involution with $\pm 1$ eigensheaves
$$\curly{S}ym^2(E^*)\oplus \wedge^2 E^*.$$
Sections of the former are symmetric $f^*=f$ and section of the latter skew $f^*=-f$.\\
If $E$ is endowed with a non-degenerate pairing $$f: E\xrightarrow{\sim} E^*,$$ the triple $(E,f,\phi)$ is an orthogonal Higgs bundle or $O(r)$-bundle, if $f$ is symmetric and $$\phi \in \Gamma(\so(E)\otimes K_S)\subset \Gamma(\Endd(E)\otimes K_S)$$ is a section of the $K_S$-twisted Lie algebra bundle of skew-symmetric endomorphisms.\\
We call the triple a symplectic Higgs bundle or $Sp(r)$-bundle if $f$ is skew and $$\phi \in \Gamma(\spp(E)\otimes K_S) \subset \Gamma(\Endd(E)\otimes K_S)$$ is a section of the $K_S$-twisted Lie algebra bundle of symmetric endomorphisms.\\ 
We refer to App. \ref{orthosymplectic} for some background. 
\subsection{$SO(r)$}
We call an $O(r)$-bundle $(E,f,\phi)$ together with an \textit{orientation} $o: \OO\xrightarrow{\sim} \wedge^rE$ an $SO(r)$-bundle, see  \cite[Def.2.1]{OT}. For example, every $O(2)$-bundle splits into its isotropic lines $E\cong L\oplus M$. In this simplest case, choosing an orientation is the same as fixing an isomorphism $ M \xrightarrow{\sim} L^*$, such that $E\cong L \oplus L^*$ and $\wedge^2E\cong L\otimes L^*$ and the map $$\OO \xrightarrow{\id} L \otimes L^* \xrightarrow{a \otimes b \mapsto -b \otimes a} L^*\otimes L \xrightarrow{\ev} \OO$$ is given by $1 \mapsto -1$.  
\subsection{Reflexive sheaves}
    We have an exact sequence
    \begin{equation*}
         \begin{tikzcd}
        0 \arrow[r] & E^{tor} \arrow[r] &E  \arrow[r,"\ev"] & E^{**}, 
        \end{tikzcd}
    \end{equation*}
    where the first map $E^{tor} \subset E$ is the natural inclusion of the torsion sub-sheaf and the second the canonical evaluation map $e \mapsto \ev_e$. That means, $\ev:E \hookrightarrow E^{**}$ if $E$ is torsion free.
    We call $E$ reflexive, if $ev$ is an isomorphism. Clearly, this holds if $E$ is locally free. 
    \begin{dfn}
        Throughout this discussion,  $E$ will be a \textit{torsion free} sheaf $E$ of $\rk(E)=r$. 
    \end{dfn}
    Moreover, we recall (see e.g. \cite[\href{https://stacks.math.columbia.edu/tag/0B3N}{Tag 0B3N}]{SP}):
    \begin{lem}\label{reflexiveisbundle}
        Over a smooth surface $S$, $E$ is reflexive precisely if it is locally free.
    \end{lem}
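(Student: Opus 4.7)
The direction ($\Leftarrow$) is immediate: if $E$ is locally free of rank $r$, then locally $E \cong \OO_S^{\oplus r}$ and the evaluation map $\ev: E \to E^{**}$ is an isomorphism in a neighborhood of every point, hence globally.

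For the nontrivial direction ($\Rightarrow$), assume $E$ is reflexive. I would show that at every closed point $x \in S$ the stalk $E_x$ is a free $\OO_{S,x}$-module, whence $E$ is locally free. The strategy is a standard two-step argument combining Serre's condition $S_2$ with the Auslander--Buchsbaum formula.

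\emph{Step 1: depth bound.} First I would establish that reflexive sheaves on the smooth (in particular normal) surface $S$ satisfy Serre's condition $S_2$, i.e.\ $\mathrm{depth}(E_x) \geq \min(2, \dim \OO_{S,x})$ for every $x \in S$. At generic and codimension-$1$ points this uses only that reflexive implies torsion free (and $\OO_{S,x}$ is regular of that dimension). At a closed point $x$ (codimension $2$), one uses the characterisation that $E$ being reflexive is equivalent to the existence of a local presentation $0 \to E \to F_0 \to F_1$ with $F_0, F_1$ locally free; taking local cohomology and using that $F_0, F_1$ have depth $2$ at $x$ forces $H^0_{\mathfrak{m}_x}(E_x) = H^1_{\mathfrak{m}_x}(E_x) = 0$, which is exactly $\mathrm{depth}(E_x) \geq 2$.

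\emph{Step 2: Auslander--Buchsbaum.} Since $S$ is smooth of dimension $2$, $\OO_{S,x}$ is a regular local ring of depth $2$ at every closed point $x$. Auslander--Buchsbaum gives
\begin{equation*}
\mathrm{pd}_{\OO_{S,x}}(E_x) + \mathrm{depth}(E_x) \;=\; \mathrm{depth}(\OO_{S,x}) \;=\; 2,
\end{equation*}
and combining with the depth bound $\mathrm{depth}(E_x) \geq 2$ from Step 1 yields $\mathrm{pd}_{\OO_{S,x}}(E_x) = 0$. Thus $E_x$ is a free $\OO_{S,x}$-module, and since this holds at every closed point, $E$ is locally free.

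The main obstacle is Step 1: verifying the $S_2$ condition at the closed points of $S$. Everything else is bookkeeping, but the depth-$2$ statement requires either the explicit local resolution above or, equivalently, invoking the general fact that $\Homm(-,\OO_S)$ of any coherent sheaf on a smooth surface is reflexive and hence $S_2$. Either route is routine, and one may alternatively just quote the cited Stacks tag; I outlined the argument for completeness.
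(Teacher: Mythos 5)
Your proof is correct: the depth/$S_2$ bound at closed points combined with Auslander--Buchsbaum is exactly the standard argument, and the local presentation $0 \to E \to F_0 \to F_1$ with $F_0,F_1$ locally free is the right way to get $\mathrm{depth}(E_x)\geq 2$. The paper itself gives no proof, simply citing the Stacks Project tag, so your write-up just supplies the standard argument behind that reference; nothing is missing.
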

    \begin{cor}\label{doubledualext}
        $E\hookrightarrow E^{**}$ extends uniquely to a map of Higgs pairs $(E,\phi) \hookrightarrow (E^{**},\phi^{**})$.
    \end{cor}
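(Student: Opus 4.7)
The plan is to construct $\phi^{**}$ by double-dualizing $\phi$ and then invoke naturality of $\mathrm{ev}$ for commutativity, with uniqueness coming from the torsion/torsion-free dichotomy on a smooth surface.

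For existence, I would start from $\phi: E \to E \otimes K_S$ and apply the dualization functor $(-)^* = \Homm(-,\OO_S)$ twice. Since $K_S$ is a line bundle, the projection formula gives a canonical isomorphism $(E \otimes K_S)^{**} \cong E^{**} \otimes K_S$. This produces a map
\[
\phi^{**}: E^{**} \longrightarrow E^{**} \otimes K_S.
\]
Commutativity of the square
\begin{center}
\begin{tikzcd}
E \arrow[r,"\mathrm{ev}"] \arrow[d,"\phi"] & E^{**} \arrow[d,"\phi^{**}"] \\
E \otimes K_S \arrow[r,"\mathrm{ev}\otimes 1"] & E^{**} \otimes K_S
\end{tikzcd}
\end{center}
is then a formal consequence of the naturality of the biduality transformation $\mathrm{id} \Rightarrow (-)^{**}$ applied to the morphism $\phi$, together with the identification above.

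For uniqueness, suppose $\tilde\phi: E^{**} \to E^{**}\otimes K_S$ is another map making the square commute. Then $(\tilde\phi - \phi^{**})\circ \mathrm{ev} = 0$, so $\tilde\phi - \phi^{**}$ factors through the quotient $E^{**}/E$. Since $E$ is torsion free on the smooth surface $S$, the cokernel $E^{**}/E$ is supported in codimension $\geq 2$, hence is torsion. On the other hand, by Lemma \ref{reflexiveisbundle}, $E^{**}$ is locally free, and therefore so is $E^{**}\otimes K_S$, which in particular is torsion free. A map from a torsion sheaf to a torsion-free sheaf must vanish, so $\tilde\phi = \phi^{**}$. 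Equivalently, applying $\Hom(-,E^{**}\otimes K_S)$ to $0 \to E \to E^{**} \to E^{**}/E \to 0$ shows the restriction $\Hom(E^{**},E^{**}\otimes K_S) \hookrightarrow \Hom(E,E^{**}\otimes K_S)$ is injective.

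No real obstacle is expected: the existence is pure naturality, and uniqueness rests only on the codimension-two support of $E^{**}/E$ for torsion-free $E$ on a smooth surface, combined with Lemma \ref{reflexiveisbundle}. The only point requiring a brief verification is the canonical identification $(E\otimes K_S)^{**}\cong E^{**}\otimes K_S$, which is standard for tensoring with a line bundle.
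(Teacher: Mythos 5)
Your proof is correct, but it follows a different route from the paper's. The paper proves existence and uniqueness in one stroke via a Hartogs-type extension argument: since $E$ and $E^{**}$ agree off a finite set of points (as $E^{**}/E$ is supported in codimension $2$), $\phi$ already defines $\phi^{**}$ on that open locus, and because $\Endd(E^{**})\otimes K_S$ is locally free, hence reflexive and $S_2$, this section extends uniquely across the missing points (this is the input from \cite[1.1.10]{HL} and \cite{Schw} cited in the footnote). You instead obtain existence purely formally, by applying the biduality functor to $\phi$ and using naturality of $\mathrm{ev}:\mathrm{id}\Rightarrow(-)^{**}$ together with the canonical identification $(E\otimes K_S)^{**}\cong E^{**}\otimes K_S$; and you get uniqueness from the observation that any two extensions differ by a map factoring through the torsion sheaf $E^{**}/E$ into the torsion-free sheaf $E^{**}\otimes K_S$, which must vanish. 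Both arguments are complete: yours is more elementary in that it bypasses the $S_2$-extension machinery entirely (at the mild cost of verifying the compatibility of the evaluation maps with tensoring by the line bundle $K_S$, which you correctly flag as standard), while the paper's argument exhibits the statement as an instance of the general principle that sections of reflexive sheaves extend over codimension-two loci, a tool it reuses elsewhere. Note also that for your uniqueness step you only need $E^{**}/E$ to be torsion, which already follows from $\mathrm{ev}$ being an isomorphism at the generic point; the codimension-two statement is stronger than necessary there.
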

    \begin{proof}
        As $E^{**}$ is always reflexive, the lemma implies it is locally free. Furthermore, as both $E$ and $E^{**}$ are locally free in codimension $1$, $E^{**}/E$ is supported on points of $S$, so there is an open $U$ where $E|_U \cong E^{**}|_U$, together with an extension $\phi^{**}$ of $\phi$. As $\Endd(E^{**})\otimes K_S$ is again locally free, it is reflexive, thus it is a $S_2$-sheaf\footnote{For $S_2$ sheaves and extension of sections, we refer to 1.1.10 \cite{HL} and 1.17 of \cite{Schw}.} and $\phi^{**}$ extends uniquely to a map on all of $E^{**}.$
    \end{proof}
  \subsection{Stability}  
    \begin{dfn}
        Denoting $H:=c_1(\OO_S(1))$ on $S$, we define the slope of $E$ to be $$ \mu(E):= \frac{\deg(E)}{\rk(E)},$$
        where $\deg(E):=\int_S c_1(E).H$.\\
        We call the pair $(E,\phi)$ slope semi-stable or $\mu$-semi-stable, if 
        $$\mu(F) \leq \mu(E)$$
        for all proper $\phi$-invariant subsheaves $F$ on $S$ and we call $(E,\phi)$ slope stable, if the inequality is strict. 
    \end{dfn}
    \begin{dfn}
         The reduced Hilbert polynomial of $E$ is given by $$p_E(n):=\frac{P_E(n)}{\rk(E)}$$ and we call $(E,\phi)$ Gieseker semi-stable if we have $$p_F(n) \leq p_E(n) \; \text{for} \; n \gg 0,$$ 
         and we define Gieseker stability again by making the inequality strict.
    \end{dfn}
    \begin{rmk}
        If $c_1(E)=0$, $p_E$ is given by
    \begin{align}\label{HPc10}
        P_E(n)=r\chi(\OO_S)-c_2(E)+ \frac{1}{2}(rnHc_1(S)+r(r+1)n^2H^2),
    \end{align}
    which follows from simplifying the general formula of App. \ref{HP}.
    \end{rmk}
    We recall the following standard fact, see \cite[1.2.7]{HL}:
    \begin{lem}\label{stableiso}
        Let $f:(E,\phi) \rightarrow (E',\phi')$ be a non-zero map of semi-stable Higgs pairs with $p_{E}=p_{E'}$. Assume that $(E,\phi)$ is stable. Then $f$ is an isomorphism.  
    \end{lem}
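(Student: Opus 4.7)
The plan is to adapt the classical sheaf-theoretic argument of \cite[1.2.7]{HL} to the Higgs category, using that kernels, images, and cokernels of intertwining morphisms inherit Higgs structures.

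First I would prove injectivity. Set $K := \ker(f) \subset E$ and $I := \mathrm{im}(f) \subset E'$. Since $f\phi = \phi' f$, the sheaf $K$ is $\phi$-invariant and $I$ is $\phi'$-invariant, so $(K,\phi|_K) \subset (E,\phi)$ and $(I,\phi'|_I) \subset (E',\phi')$ are sub-Higgs pairs. As $f \neq 0$ and $E'$ is torsion-free, $I$ is a non-zero torsion-free subsheaf, hence $\rk(I) \geq 1$. Assuming $K \neq 0$, torsion-freeness of $E$ gives $\rk(K) \geq 1$; stability of $(E,\phi)$ then yields $p_K < p_E$, while semi-stability of $(E',\phi')$ applied to $I$ yields $p_I \leq p_{E'} = p_E$. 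The short exact sequence $0 \to K \to E \to I \to 0$ combined with $\rk(E) = \rk(K) + \rk(I)$ gives
\[
p_E = \frac{\rk(K)}{\rk(E)}\, p_K + \frac{\rk(I)}{\rk(E)}\, p_I,
\]
a convex combination with both weights strictly positive, forcing $p_E < p_E$. This contradiction shows $K = 0$, so $f$ is injective.

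Second, I would upgrade injectivity to an isomorphism. With $f: E \hookrightarrow E'$ injective and $p_E = p_{E'}$, the moduli-theoretic convention of fixed Chern character (in particular fixed rank) gives $\rk(E) = \rk(E')$, so the equality of reduced Hilbert polynomials promotes to $P_E = P_{E'}$. Consequently $\coker(f)$ has vanishing Hilbert polynomial and must be zero, so $f$ is an isomorphism.

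The only real subtlety is the rank bookkeeping: one must rule out $\rk(K) = 0$ and $\rk(I) = 0$, so that the Hilbert polynomial averaging is genuinely a strict convex combination. This is handled by the standing torsion-freeness assumption on $E$ and $E'$, after which the proof is a direct transcription of the classical case into the Higgs setting.
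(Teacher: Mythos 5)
Your proof is correct, and it is essentially the argument the paper intends: the paper offers no proof of its own, simply citing \cite[1.2.7]{HL}, and your write-up is the standard transcription of that argument into the Higgs category (kernel and image of an intertwining map are $\phi$-invariant, Hilbert polynomials are additive, and the strict convex combination of reduced Hilbert polynomials kills $\ker(f)$). The injectivity half uses exactly the stated hypotheses and is fine. The one point worth flagging is the surjectivity half: as you yourself note, you import $\rk(E)=\rk(E')$ from the fixed-Chern-character convention, which is not part of the literal statement of the lemma; and some such supplement is genuinely necessary, since with the target only semistable and no rank constraint the statement fails as written (take $E'=E\oplus E$, $\phi'=\phi\oplus\phi$, and $f$ the inclusion of the first summand). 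The alternative repair, which is the hypothesis \cite[1.2.7]{HL} actually uses for surjectivity, is stability of the \emph{target}: if $(E',\phi')$ is stable, then the image $I$, being a torsion-free quotient of the semistable $(E,\phi)$, satisfies $p_I\geq p_E=p_{E'}$, so it cannot be a proper $\phi'$-invariant subsheaf and $f$ is onto, after which injectivity as you proved it gives the isomorphism. In the paper's only application (the proof of Prop.~\ref{fixedlocuscomp}, where the target $(E^*,-\phi^*)$ is stable of the same rank as $E$), both repairs are available, so your proof is adequate for the intended use; just be aware that the extra input you invoke is doing real work and should ideally be stated as a hypothesis.
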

    \begin{claim}\label{doubledual}
        If $(E,\phi)$ is stable, so is $(E^{**},\phi^{**})$. 
    \end{claim}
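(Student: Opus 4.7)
The plan is to reduce the stability of $(E^{**}, \phi^{**})$ to that of $(E,\phi)$ by intersecting subsheaves of $E^{**}$ with $E$. Since saturating a subsheaf only increases its slope and reduced Hilbert polynomial, it suffices to check that every proper saturated $\phi^{**}$-invariant subsheaf $F' \subsetneq E^{**}$ fails to destabilize. To such an $F'$ I would associate the candidate $F := F' \cap E \subset E$, and show it is a proper saturated $\phi$-invariant subsheaf of $E$ with $\mu(F) = \mu(F')$; stability of $(E,\phi)$ then transfers directly.

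First I would verify the three properties $F$ must satisfy. For $\phi$-invariance, for any local section $x$ of $F = F' \cap E$ one has $\phi(x) = \phi^{**}(x) \in (F' \otimes K_S) \cap (E \otimes K_S) = (F' \cap E) \otimes K_S = F \otimes K_S$, using flatness of $K_S$. For slope equality, the cokernel $F'/F$ embeds into $E^{**}/E$, which is $0$-dimensional by the proof of Corollary \ref{doubledualext}; hence $\rk(F) = \rk(F')$ and $c_1(F) = c_1(F')$, so $\mu(F) = \mu(F')$, and likewise $\mu(E) = \mu(E^{**})$. For properness in $E$, if $F = E$ then $E \subset F' \subsetneq E^{**}$ would force $E^{**}/F'$ to be a $0$-dimensional quotient of $E^{**}/E$, contradicting the fact that saturation of $F'$ in $E^{**}$ makes $E^{**}/F'$ torsion-free.

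Applying slope-stability of $(E,\phi)$ to the proper $\phi$-invariant subsheaf $F \subsetneq E$ then yields $\mu(F) < \mu(E)$, and combining with the equalities $\mu(F) = \mu(F')$ and $\mu(E) = \mu(E^{**})$ gives $\mu(F') < \mu(E^{**})$, closing the slope-stable case. For Gieseker stability one argues identically when $\mu(F) < \mu(E)$, because the strictly negative $n^{d-1}$-coefficient of $p_F - p_E$ is inherited by $p_{F'} - p_{E^{**}}$.

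The hard part will be the boundary case $\mu(F) = \mu(E)$ in the Gieseker setting: here $p_{F'} - p_{E^{**}}$ becomes a constant differing from $p_F - p_E$ by $\ell(F'/F)/\rk(F) - \ell(E^{**}/E)/\rk(E)$, and this correction is not controlled by stability of $(E,\phi)$ alone, since for a low-rank $F$ the term $\ell(F'/F)/\rk(F)$ can exceed $\ell(E^{**}/E)/\rk(E)$. Handling this would require a finer comparison exploiting that $F$ is saturated in $E$ and tracking how $F'/F$ sits inside the $0$-dimensional torsion $E^{**}/E$ across all possible ranks $\rk(F) < \rk(E)$.
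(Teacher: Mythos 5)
Your core argument is the same as the paper's: given a saturated $\phi^{**}$-invariant subsheaf $F'\subsetneq E^{**}$, intersect with $E$; since $F'/(F'\cap E)$ embeds into the zero-dimensional sheaf $E^{**}/E$, rank and $c_1$ (hence slope) are unchanged, and slope stability of $(E,\phi)$ applied to $F'\cap E$ gives $\mu(F')<\mu(E^{**})$. The paper's proof is exactly this (normalized by $c_1(E)=c_1(E^{**})=0$), and your extra checks of $\phi$-invariance, saturation and properness are fine, if slightly more careful than the paper's. The ``hard part'' you flag --- the Gieseker boundary case $\mu(F)=\mu(E)$ --- is not needed and the paper never addresses it: the claim lives in the regime fixed in \ref{moduli}, where $\ch(E)=(r,0,c_2)$ is chosen so that slope semistability, slope stability and Gieseker stability coincide for pairs with these invariants, so the hypothesis may be taken to be slope stability; and since slope stability implies Gieseker stability for any Chern character (in particular that of $E^{**}$, whose $c_2$ differs), your slope conclusion for $(E^{**},\phi^{**})$ already yields its Gieseker stability, with no comparison of the lengths $\ell(F'/F)$ and $\ell(E^{**}/E)$ required. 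Your instinct that Gieseker stability alone does not control these constant terms is sound --- double duals of merely Gieseker-stable (Higgs) sheaves need not remain Gieseker stable --- which is precisely why the statement should be read, as the paper does, through slope stability; with that reading your proof is complete and identical in substance to the paper's.
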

    \begin{proof}
        As $c_1(E)=c_1(E^{**})=0$, we have $\mu(E)=\mu(E^{**})=0$. \\ Assuming that $(F,\psi)\subset (E^{**},\phi^{**})$ is a Higgs sub-sheaf, pulling back along $\ev$ gives a Higgs sub-sheaf $(E\cap F, \psi|_{E\cap F})$ of both $(F,\psi)$ and $(E,\phi)$. \\
      Thus, as $E^{**}/E$ is supported on points, so is $E/(E\cap F)$: we have $c_1(F).H=c_1(E\cap F).H<0$, as $(E,\phi)$ is stable, hence $\mu(F)<0$ as desired. 
    \end{proof}
    \begin{rmk}
        We further remark that slope stability implies Gieseker stability, which implies slope semi-stability. Thus, if the invariants $\ch(E)$ are chosen such that slope stability agrees with slope semi-stability, we get that Gieseker stability = slope stability. 
    \end{rmk}
    \subsection{Moduli spaces}\label{moduli}
    Fixing invariants $\ch(E)$ such that slope semi-stability implies stability, there is a quasi-projective moduli scheme $$\N=\N_{\ch(E)}$$ whose closed points $\mathbf{Spec}(\C) \rightarrow \N$ parametrize isomorphism classes of stable Higgs pairs $[(E,\phi)]$. As stable sheaves are simple, we have $$\mathrm{Aut}(E,\phi)\cong \mathbf{C}^\times.$$
    We fix $\ch(E)=(r,0,c_2)$ such that semi-stability implies stability.
    \begin{claim}
        Let $c_2(E)$ be of minimal degree such that $\N$ is non-empty. Then all pairs $(E,\phi)$ are locally free.
    \end{claim}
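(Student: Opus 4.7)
The plan is to argue by contradiction, using the double dual of $E$ to produce a stable Higgs pair with strictly smaller $c_2$. Suppose, for the sake of contradiction, that some $(E,\phi) \in \N$ has $E$ not locally free. Then by Lemma \ref{reflexiveisbundle} it is not reflexive, so the canonical map $E \hookrightarrow E^{**}$ is a proper inclusion with nonzero cokernel $Q$. Since $E$ is torsion free (hence locally free on an open set whose complement has codimension $\geq 2$) and $E^{**}$ is locally free, the cokernel $Q$ must be supported on finitely many points of $S$, i.e.\ $0$-dimensional of finite length.

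Next, I would invoke the work done earlier: by Corollary \ref{doubledualext}, $\phi$ extends uniquely to $\phi^{**}: E^{**} \to E^{**} \otimes K_S$, and by Claim \ref{doubledual} the resulting pair $(E^{**},\phi^{**})$ is again stable. It therefore represents a point in the moduli problem of stable Higgs pairs with Chern character $\ch(E^{**})$, which still has $\rk = r$ and $c_1 = 0$ (since $E^{**}/E$ is $0$-dimensional).

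The key computation is the change in $c_2$: from the short exact sequence
\begin{equation*}
0 \rightarrow E \rightarrow E^{**} \rightarrow Q \rightarrow 0
\end{equation*}
and additivity of the Chern character, together with $\ch(Q) = (0,0,\ell)$ where $\ell = \mathrm{length}(Q) > 0$, one gets $\ch_2(E^{**}) = \ch_2(E) + \ell$, hence (using $c_1=0$) $c_2(E^{**}) = c_2(E) - \ell < c_2(E)$. Thus $(E^{**},\phi^{**})$ is a stable Higgs pair with the same rank, trivial $c_1$, and strictly smaller $c_2$, contradicting the minimality assumption on $c_2(E)$.

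The main obstacle is a subtle point I would flag: the minimality hypothesis concerns $c_2$ for which $\N = \N_{(r,0,c_2)}$ is non-empty as a moduli scheme, which requires slope semi-stability to imply stability. Dropping to the smaller value $c_2(E^{**})$ might not automatically preserve this numerical condition. One should argue that the minimality is understood among all $c_2'$ for which the stable moduli is non-empty (in the stack-theoretic sense), so the existence of $(E^{**},\phi^{**})$ still yields the desired contradiction; alternatively, one verifies that the standard numerical criterion ensuring stability $=$ semi-stability (e.g.\ $\gcd(r, c_1.H)$-type conditions, trivially satisfied here since $c_1 = 0$ forces inspection of $c_2$ modulo $r$) is preserved or can be arranged. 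Modulo this bookkeeping, the double-dual contradiction is clean and essentially forced by the earlier lemmas.
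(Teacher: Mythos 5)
Your proposal is correct and follows essentially the same route as the paper: pass to $(E^{**},\phi^{**})$ via Cor.~\ref{doubledualext}, note the cokernel is $0$-dimensional, and contradict minimality of $c_2$. The only cosmetic difference is that you compute the drop in $c_2$ by additivity of $\ch$ on the sequence $0\to E\to E^{**}\to Q\to 0$, whereas the paper compares Hilbert polynomials via formula \ref{HPc10}; your flagged bookkeeping point about stability $=$ semistability for the smaller $c_2$ is a fair observation that the paper's proof also leaves implicit.
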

    \begin{proof}
        $E$ is torsion free, we have a canonical embedding $E \hookrightarrow E^{**}$, extending uniquely to a map $(E,\phi)\hookrightarrow (E^{**},\phi^{**})$ by Cor. \ref{doubledualext}, which is an isomorphism if and only if $E$ is locally free by Lem. \ref{reflexiveisbundle}.\\
        Assuming that $E$ is not locally free, $\rk(E)=\rk(E^{**})$ implies that $P_{E}< P_{E^{**}}$ by stability of $(E,\phi)$. By the above expression for Hilbert polynomials \ref{HPc10}, this implies $c_2(E)> c_2(E^{**})$, which contradicts minimality of $c_2(E)$.
    \end{proof}    
\section{Involutions}
We want to express pairs $(E,\phi)$ of $\N$ of gauge group $Sp(r)$ and $O(r)$ as fixed points of an involution. To make this into a well-defined action on moduli spaces, we need to address stability. To keep it simple, the involution will be first defined for single sheaves, then in families. 
\subsection{Single sheaves}
    \begin{dfn}
         Fixing a Chern character $\mathrm{ch}(E)=(r,0,c_2)$ with $r>0$  and $c_2(E)$ minimal as above, we define $$ (E,\phi) \mapsto (E^*,-\phi^*),$$
        preserving $\ch(E)$.
    \end{dfn}
    We claim that this preserves stability:
    \begin{claim}
       If $(E,\phi)$ is $\mu$-stable, so is $(E^*,- \phi^*)$.
    \end{claim}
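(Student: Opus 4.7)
The plan is to reduce slope stability of $(E^*, -\phi^*)$ to that of $(E, \phi)$ by dualizing Higgs subsheaves. By the preceding claim, the minimality of $c_2(E)$ forces $E$ (and hence $E^*$) to be locally free of rank $r$; since $c_1(E^*) = -c_1(E) = 0$, we have $\mu(E^*) = 0$. Let $F \subsetneq E^*$ be a proper nonzero subsheaf with $\phi^*(F) \subset F \otimes K_S$ (equivalently $(-\phi^*)$-invariant). I would first replace $F$ by its saturation $\bar F$, the preimage of the torsion of $E^*/F$. Invariance is preserved, since the induced $\phi^*$ on $E^*/F$ preserves its torsion subsheaf by functoriality; and $\mu(\bar F) \geq \mu(F)$ because $\bar F / F$ is a torsion sheaf on the projective surface $S$, whose $c_1$ pairs non-negatively with the ample class $H$. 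Hence it suffices to prove $\mu(\bar F) < 0$, and we may assume $Q := E^*/\bar F$ is torsion-free.

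Next I would dualize $0 \to \bar F \to E^* \to Q \to 0$ via $\Homm(-, \OO_S)$, using local freeness of $E^*$ to kill $\Extt^1(E^*, \OO_S)$, obtaining
\begin{equation*}
0 \to Q^* \to E \to \bar F^* \to \Extt^1(Q, \OO_S) \to 0,
\end{equation*}
under the identification $E^{**} = E$. The key point is that $Q^* \subset E$ is $\phi$-invariant: this follows from functoriality of $\Homm(-, \OO_S)$ applied to the commutative square expressing the descent of $\phi^*$ on $E^*$ to $Q$, once one tensors the resulting diagram with $K_S$ to identify the dual of $\phi^*: E^* \to E^* \otimes K_S$ with $\phi: E \to E \otimes K_S$. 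Since $\rk(Q^*) = \rk(Q) = r - \rk(\bar F)$ is strictly between $0$ and $r$, this $Q^*$ is a proper nonzero $\phi$-invariant subsheaf of $E$.

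Stability of $(E, \phi)$ then gives $\mu(Q^*) < 0$. To extract $\mu(\bar F) < 0$, I would compute $c_1(Q^*)$: the double dual $Q^{**}$ is reflexive and hence locally free on $S$ by Lemma \ref{reflexiveisbundle}, with $c_1(Q^{**}) = c_1(Q)$ because $Q \hookrightarrow Q^{**}$ has cokernel of codimension $\geq 2$; thus $c_1(Q^*) = c_1((Q^{**})^*) = -c_1(Q)$. Combining with $c_1(Q) = -c_1(\bar F)$ from additivity of $c_1$ in the short exact sequence and $c_1(E^*) = 0$, one obtains $c_1(Q^*) = c_1(\bar F)$, so $\mu(Q^*) = \deg(\bar F) / (r - \rk(\bar F)) < 0$ forces $\deg(\bar F) < 0$, hence $\mu(\bar F) < 0$ and therefore $\mu(F) \leq \mu(\bar F) < 0 = \mu(E^*)$. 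The main technical step is verifying the $\phi$-invariance of $Q^* \subset E$ under duality — pure functoriality, but the $K_S$-twists need careful bookkeeping; everything else is a routine slope computation leveraging $c_1(E^*) = 0$ and the non-negativity of $c_1 \cdot H$ for torsion sheaves.
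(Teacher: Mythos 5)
Your proposal is correct and follows essentially the same route as the paper: both reduce to a torsion-free quotient $Q$ of $E^*$, dualize to obtain a $\phi$-invariant subsheaf $Q^*\subset E^{**}\cong E$, apply stability of $(E,\phi)$, and use $c_1(Q)=-c_1(Q^*)$ (cokernel of $Q\hookrightarrow Q^{**}$ in codimension $\geq 2$) to conclude. The only difference is bookkeeping: you phrase stability via subsheaves and saturate by hand, while the paper invokes the quotient formulation of slope stability (citing \cite[1.2.6]{HL}) to pass directly to a torsion-free Higgs quotient.
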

    \begin{proof}
        Note that $\mu(E)=\mu(E^*)=0$. If $(E,\phi)$ is stable, let $(E^*,-\phi^*) \twoheadrightarrow (Q,\psi)$ be a Higgs quotient, so we need to show that $\deg(Q) >0$. By \cite[1.2.6]{HL} we may assume that $Q$ is torsion free. As $E$ is reflexive, $ (Q^*,-\psi^*)\subset (E,\phi)$ is a Higgs sub-sheaf, so $\deg(Q^*) <0$ by stability, thus $\deg(Q)=-\deg(Q^*)>0$.
    \end{proof}   
\subsection{Universal families}
We have a universal family of Higgs bundles $$(\mathsf{E},\Phi) \; \text{on} \; S\times \N,$$
flat over $\N$.
Equivalently, this is a $\N$-flat universal family of spectral sheaves $$\EE \; \text{on} \; X\times \N.$$
So in particular, for a closed point $n:\Spec(\C) \rightarrow \N$, corresponding to $\E_\phi$ on $X$, we have $\EE|_{X_n}\cong \E_{\phi}$. \\
As $\strE$ is a bundle, there is a two-term resolution of locally frees (\cite[2.11]{TT1}),
$$\EE^\bullet=[\EE^{-1}\xrightarrow{d} \EE^0] \twoheadrightarrow \EE$$
We further remark that universal families always exists locally and globally perhaps only as a \textit{twisted} universal family.\\
Furthermore, the deformation-obstruction complex $$\RHomm_{p_X}(\EE,\EE)[1] \in \mathbf{D}^b(\N)$$
is always a well-defined object.

\subsection{Classifying maps}
As stability of $(E,\phi)$ implies stability of the pair $(E^*,-\phi^*)$,
the functor
$$\iota: (\mathsf{E},\Phi) \mapsto (\mathsf{E}^*,-\Phi^*),$$
preserves flatness and induces a classifying map $$\iota: \N \rightarrow \N,$$
such that $$(\id\times \iota)^*(\mathsf{E},\Phi)\cong(\mathsf{E}^*,-\Phi^*).$$
Thus, $\iota$ is point-wise given by $$ [(E,\phi)] \mapsto [(E^*,-\phi^*)].$$
As bundles are reflexive, we immediately see that $\iota^2=\id$ holds.
\section{The fixed locus}\label{fixedlocus}
    \subsection{Fixed points}
    In this section, we describe fixed points of $$\iota: (E,\phi) \mapsto (E^*,-\phi^*).$$
    
    As $\iota$ is a well-defined map on $\N$, we may consider its scheme-theoretic fixed locus $\N^\iota \subset \N$.\\
    Such a fixed point $[(E,\phi)] \in \N^\iota$ is given by an isomorphism $f:E\xrightarrow{\sim} E^*$ intertwining $\phi$ and $-\phi^*$, i.e. there is a commutative square
    \begin{center}
        \begin{tikzcd}
            E \arrow[r,"f"]\arrow[d,"\phi"] &  E^* \arrow[d,"-\phi^*"] \\
            E\otimes K_S \arrow[r,"f\otimes1"] & E^*\otimes K_S.
        \end{tikzcd}
    \end{center}
    As stable sheaves are simple, $\mathrm{Aut}(E,\phi)=\langle f \rangle $, so $f$ determines the automorphisms of $(E,\phi)$.
\subsection{In families}
    In terms of families $(\strE,\Phi)$ of Higgs bundles, there is a linearization map over the fixed locus $X\times\N^\iota$, $$\tilde{f}:(\strE,\Phi) \xrightarrow{\sim} (\mathsf{E}^*,-\Phi^*)$$ and we split $$\tilde{f}=\tilde{q}+\tilde{\omega} \in \Gamma(\Homm(\strE,\strE^*)|_{X\times\N^\iota})\cong\Gamma(\curly{S}ym^2(\strE^*)|_{X\times \N^\iota})\oplus \Gamma(\wedge^2\strE^*|_{X\times \N^\iota})$$ into symmetric and skew-symmetric part.
\subsection{Determinants}
    If $E \xrightarrow{\sim} E^*$ is a pairing, then taking determinants gives that $\det(E)$ is $2$-torsion. There is a determinant map\footnote{For background on (derived) determinant maps, we refer to \cite{STV}.} $$\det: \N \rightarrow \mathbf{Pic}^0(S),$$ sending $$E\mapsto \det(E),$$ thus we see that the restriction  to $\N^\iota$ gives a map $$\N^\iota \rightarrow \mathbf{Pic}^0(S)[2]\subset \mathbf{Pic}^0(S)$$ mapping to the discrete set of $2$-torsion line bundles. As we are only interested in sheaves $E$ with $\det(E)\cong \OO_S$, we choose the component of $\N^\iota$ given by $\N^\iota \cap \det^{-1}([\OO_S])$.\\
    For instance, if $r=2$, this restriction

    For the rest of this discussion, this will be dropped from notation and we may simply assume all $\iota$-fixed bundles have trivial determinants.
    \begin{prop}\label{fixedlocuscomp}
        If $(E,\phi)$ is a $\iota$-fixed Higgs bundle, then $f$ is either symmetric $f=f^*$ or skew $f=-f^*$ and $(E,\phi)$ admits either an orthogonal or symplectic structure.\\
        Accordingly, we have two components of the moduli fixed locus $$\N_{{O}(r)} \hookrightarrow \N^\iota \hookleftarrow \N_{Sp(r)}.$$
    \end{prop}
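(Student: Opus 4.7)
The plan is to exploit simplicity of stable Higgs pairs to pin down $f$ up to a sign, then match each sign with the relevant orthosymplectic structure. First I would use that the invariants were chosen so that every pair of $\N$ is locally free (\ref{moduli}), so double-dualization is an honest isomorphism $E \cong E^{**}$ and the assignment $(E,\phi) \mapsto (E^*, -\phi^*)$ is a contravariant self-equivalence of the category of locally free Higgs pairs. Applying this equivalence to the morphism $f: (E,\phi) \to (E^*, -\phi^*)$ produces $f^*$, which is again a morphism $(E, \phi) \to (E^*, -\phi^*)$. Since stability implies simplicity (Lemma \ref{stableiso}), $f^{-1} \circ f^*$ must be a scalar $\lambda \in \C^\times$, and dualizing once more forces $f = f^{**} = \lambda^2 f$, so $\lambda = \pm 1$.

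Second, I would translate the two signs into the orthogonal and symplectic structures recalled earlier. For $\lambda = +1$, $f$ is symmetric and the intertwining relation $(f \otimes 1)\phi = -\phi^* f$ expresses that $\phi$ is skew-adjoint with respect to $f$, i.e.\ $\phi \in \Gamma(\so(E) \otimes K_S)$, giving an $O(r)$-Higgs bundle; for $\lambda = -1$, $f$ is a symplectic form and the same computation identifies $\phi$ as a section of $\Gamma(\spp(E) \otimes K_S)$, giving an $Sp(r)$-Higgs bundle. Both translations are direct unwindings of the definitions of $\so(E)$ and $\spp(E)$ together with the standard fact that the adjoint with respect to a non-degenerate pairing matches the dualization $\phi \mapsto \phi^*$ up to the isomorphism $E \cong E^*$ provided by $f$.

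Third, to globalize, I would use the linearization $\tilde{f} = \tilde{q} + \tilde{\omega}$ from the preceding subsection, with $\tilde{q} \in \Gamma(\curly{S}ym^2(\strE^*)|_{X \times \N^\iota})$ and $\tilde{\omega} \in \Gamma(\wedge^2 \strE^*|_{X \times \N^\iota})$. By the pointwise analysis above, at each closed point of $\N^\iota$ exactly one of $\tilde{q}, \tilde{\omega}$ vanishes; they cannot vanish simultaneously because $\tilde{f}$ is an isomorphism. The vanishing loci $\{\tilde{\omega} = 0\}$ and $\{\tilde{q} = 0\}$ are therefore disjoint closed subschemes of $\N^\iota$ whose union is everything, so each is also open, yielding the two components $\N_{O(r)}$ and $\N_{Sp(r)}$.

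The main obstacle I anticipate is the global step in the presence of only twisted universal families: I would need to check that the sign $\lambda$ is invariant under the gauge rescaling $f \mapsto cf$ (which it is, since $f^* = \lambda f$ gives $(cf)^* = \lambda (cf)$), and that $\tilde{q}, \tilde{\omega}$ descend to well-defined sections of the appropriate twisted bundles on $\N^\iota$ so that their vanishing loci are genuine closed subschemes. Modulo this bookkeeping, the pointwise step is a clean application of Schur's lemma for simple objects.
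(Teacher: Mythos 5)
Your proposal is correct, and it reaches the dichotomy by a slightly different route than the paper. The paper splits $f=q+\omega$ into symmetric and skew parts, dualizes the intertwining relation and adds it back to isolate $q\phi=-\phi^*q$, concludes via Lemma \ref{stableiso} that a nonzero $q$ is itself an isomorphism of Higgs pairs, and then kills $\omega$ using $\mathrm{Aut}(E,\phi)\cong\C^\times$; the case $q=0$ gives the symplectic component. You instead dualize $f$ itself, observe that $f^*$ again intertwines $\phi$ with $-\phi^*$, and apply simplicity to get $f^*=\lambda f$ with $\lambda^2=1$, so $f$ is purely symmetric or purely skew in one stroke --- the classical Schur-type argument for pairings on simple objects. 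Both arguments rest on the same two pillars (local freeness from the minimality of $c_2$, simplicity of stable pairs), and your globalization via the linearization $\tilde f=\tilde q+\tilde\omega$ and the disjoint closed loci $\{\tilde q=0\}$, $\{\tilde\omega=0\}$ is the same open-and-closed decomposition the paper uses. What your version buys is economy and the explicit observation that the sign $\lambda$ is invariant under rescaling $f\mapsto cf$, which is the right thing to note when the universal family is only twisted; what the paper's version buys is that it works directly with the sections $\tilde q$, $\tilde\omega$ whose vanishing cuts out the two components, so the scheme-theoretic statement about $\N^\iota$ falls out immediately. Your identification of the two signs with $\so(E)$ and $\spp(E)$ is the same unwinding of the appendix definitions (Cor.\ \ref{orthogonalisskew} and Cor.\ \ref{symplecticissym}) that the paper performs.
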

    \begin{proof}
        Let $f:(E,\phi)\xrightarrow{\sim} (E^*,-\phi^*)$ be a $\iota$-fixed Higgs bundle. We write $f=q +\omega$ as symmetric and skew and first assume that $q\neq 0$.
        We get $$q\phi + \omega \phi=-\phi^*q-\phi^*\omega,$$
        so taking duals of this equation and adding it to both sides results in $$q\phi=-\phi^*q.$$
        Thus $$q: (E,\phi) \rightarrow (E^*,-\phi^*)$$ is a non-zero map, so by Lem. \ref{stableiso}, $q$ is invertible and defines a non-degenerate pairing $q:E\xrightarrow{\sim} E^*$. Furthermore, we compute $$(q\phi)^*=\phi^*q^*=\phi^*q=-q\phi,$$
        so $q\phi$ is skew, therefore $\phi\in \Gamma(\so(E)\otimes K_S)$. As $\mathrm{Aut}(E,\phi)\cong\C^\times$, we must have $\omega=0$.\\
        Conversely, for every orthogonal Higgs bundle $(E,q,\phi)$, $q\phi$ is skew (see App. Cor. \ref{orthogonalisskew}), thus $q\phi=-(q\phi)^*=-\phi^*q$ implies that it gives a $\iota$-fixed point.
        Thus, $$\N_{{O}(2n)}=\N^\iota \cap\{\tilde{q}\neq 0\}=\N^\iota \cap\{\tilde{\omega}=0\}\subset \N^\iota$$ is both open and closed. \\
        If $q=0$, then $f=\omega$ and $(E,\omega,\phi)$ is a symplectic Higgs bundle: Indeed, as we have $\omega \phi=-\phi^*\omega$, we find $$(\omega \phi)^*=\phi^*\omega^*=-\phi^*\omega=\omega \phi,$$ so $\omega\phi$ is symmetric, hence $\phi \in \Gamma(\spp(E)\otimes K_S)$. For the same reason as before, any symplectic bundle $(E,\omega,\phi)$ lies in $\N^\iota$ and we get $$\N_{{Sp}(2n)}= \N^\iota \cap \{\tilde{q}=0\}=\N^\iota \cap \{\tilde{\omega} \neq 0\}$$ is open and closed, so another component of $\N^\iota$.
    \end{proof}
\begin{rmk}
We conclude that orthosymplectic Higgs bundles appear in $U(r)$-moduli spaces $\N$  as fixed points of an involution.\\
The virtual structures are induced by their deformation-obstruction theory, which we want to phrase in terms of sheaves on $X$. For this we need to formulate $\iota$ in terms of spectral sheaves $\E_\phi$. 
\end{rmk}
\begin{dfn}
We decompose $\iota=D\circ \sigma$ into dualizing $$ D: (E,\phi) \mapsto (E^*,\phi^*)$$
and the sign map $$\sigma: (E,\phi) \mapsto (E,-\phi)$$
\end{dfn}
\section{Action on spectral sheaves}\label{spectral}
We start with some preliminaries.
Both maps, $D$ and $\sigma$, act on the resolution 
\begin{equation}\label{spectralresol}
    \begin{tikzcd} [column sep=large]
        0 \arrow[r] & \pi^*E \otimes K_S^{-1} \arrow[r, "\pi^*\phi-\tau\cdot \id"]& \pi^*E \arrow[r,"ev"]& \E_\phi \arrow[r] & 0
    \end{tikzcd}
\end{equation}
in a natural way and give us an explicit description of $\E_{-\phi^*}$.
\subsection{Duals} We start with the correct notion of duals for spectral sheaves:
    \begin{dfn}
        We define the dual of $\E_\phi$ to be the sheaf $$\E_\phi^D:=\Extt^1(\E_\phi,\pi^*K_S^{-1}).$$ 
    \end{dfn}
    \begin{lem}\label{spectralprojdim}
        If $E$ is locally free, we have $$\RHomm(\E_\phi,\pi^*K_S^{-1})[1]\cong \E_\phi^D.$$
        Furthermore, $$\E_\phi^D=\coker(\pi^*\phi^*-\tau \cdot \id)\otimes \pi^*K_S^{-1}$$
    \end{lem}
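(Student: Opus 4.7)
The plan is to apply $\RHomm(-, \pi^*K_S^{-1})$ directly to the two-term locally free resolution (\ref{spectralresol}) of $\E_\phi$. Since $E$ is locally free, both $\pi^*E \otimes \pi^*K_S^{-1}$ and $\pi^*E$ are locally free on $X$, so $\RHomm(-, \pi^*K_S^{-1})$ can be evaluated termwise and yields the two-term complex
$$ \bigl[\, \pi^*E^* \otimes \pi^*K_S^{-1} \xrightarrow{d^\vee} \pi^*E^*\, \bigr] $$
sitting in cohomological degrees $0$ and $1$, where $d^\vee$ is the transpose of $d := \pi^*\phi - \tau \cdot \id$.

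To establish the first isomorphism, I would next show that $\Extt^0(\E_\phi, \pi^*K_S^{-1}) = 0$. This is immediate because $\E_\phi$ is torsion (supported on the spectral surface, a proper closed subscheme of $X$) while $\pi^*K_S^{-1}$ is torsion-free: any morphism $\E_\phi \to \pi^*K_S^{-1}$ must vanish. Consequently $\RHomm(\E_\phi, \pi^*K_S^{-1})$ is concentrated in degree $1$, and shifting by $[1]$ produces $\Extt^1(\E_\phi, \pi^*K_S^{-1}) = \E_\phi^D$.

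For the explicit cokernel description, $\Extt^1$ is simply $\coker(d^\vee)$. Twisting by $\pi^*K_S$ converts $d^\vee : \pi^*E^* \otimes \pi^*K_S^{-1} \to \pi^*E^*$ into the Higgs-style map $\pi^*\phi^* - \tau \cdot \id : \pi^*E^* \to \pi^*E^* \otimes \pi^*K_S$; taking its cokernel and then untwisting by $\pi^*K_S^{-1}$ yields the stated identity $\E_\phi^D \cong \coker(\pi^*\phi^* - \tau \cdot \id) \otimes \pi^*K_S^{-1}$.

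The only real technical point is the identification of $d^\vee$ with the Higgs-style map $\pi^*\phi^* - \tau \cdot \id$ after the $\pi^*K_S$-twist. The $\tau$-term is self-transpose since $\tau$ acts by multiplication by a global section of $\pi^*K_S$, and the $\pi^*\phi$-piece transposes to $\pi^*\phi^*$ under the canonical pairing $\Hom(A, B) \cong \Hom(B^*, A^*)$ for locally free $A, B$; no sign subtleties arise. Once this bookkeeping is done, the rest is mechanical.
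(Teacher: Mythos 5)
Your proof is correct and follows essentially the same route as the paper: dualize the two-term spectral resolution, kill $\Homm(\E_\phi,\pi^*K_S^{-1})$ because a torsion sheaf admits no nonzero map to a line bundle, and read off $\Extt^1$ as the cokernel of the transposed map after the $\pi^*K_S$-twist bookkeeping. The only cosmetic difference is that the paper dualizes against $\OO_X$ and invokes purity/$S_2$ (via \cite[1.1.10]{HL}) for the vanishing of $\Extt^{i>1}$, whereas you obtain this directly from the length-one locally free resolution.
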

    \begin{proof}
        As $E$ is locally free, the resolution \ref{spectralresol} implies that $\mathrm{dh}(\E_\phi)=1$. As $\mathrm{codim}(\E_\phi,X)=1$, $\E_\phi$ is an $S_2$-sheaf, thus $\Extt^i(\E_\phi,\OO_X)=0$ for $i>1$ by \cite[1.1.10]{HL}. As $\E_\phi$ is pure torsion, $\Homm(\E_\phi,\OO_X)=0$ and we get $\RHomm(\E_\phi,\OO_X)[1]\cong \Extt^1(\E_\phi,\OO_X)$. Tensoring both sides with $\pi^*K_S^{-1}$ shows the first equality.\\
        For the second, we apply $\RHomm(\_,\OO_X)$ to \ref{spectralresol} and recall that for all bundles $F$, $\Extt^{i}(F,\OO)$ vanish for $i>0$, thus $$\coker(\pi^*\phi^*-\tau\cdot \id)=\Extt^1(\E_\phi,\OO_X)\otimes \pi^*K_S.$$
    \end{proof}
    \begin{cor}
       The spectral sheaf $\E_{\phi^*}$ associated to $(E^*,\phi^*)$ is given by $$\E_\phi^D=\Extt^1(\E_\phi,\pi^*K_S^{-1}).$$
    \end{cor}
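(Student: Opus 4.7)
The plan is to compare the explicit resolution of $\E_{\phi^*}$ with the cokernel description of $\E_\phi^D$ obtained in Lemma \ref{spectralprojdim}. Since by definition $\E_{\phi^*}$ is the spectral sheaf of the Higgs pair $(E^*,\phi^*)$, applying the resolution \ref{spectralresol} to this pair gives
\begin{equation*}
\begin{tikzcd}[column sep=large]
0 \arrow[r] & \pi^*E^* \otimes K_S^{-1} \arrow[r,"\pi^*\phi^*-\tau\cdot\id"] & \pi^*E^* \arrow[r] & \E_{\phi^*} \arrow[r] & 0,
\end{tikzcd}
\end{equation*}
so $\E_{\phi^*}\cong\coker(\pi^*\phi^*-\tau\cdot\id)$.

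On the other hand, Lemma \ref{spectralprojdim} already identifies
$$\E_\phi^D=\coker(\pi^*\phi^*-\tau\cdot\id)\otimes \pi^*K_S^{-1}\otimes \pi^*K_S=\coker(\pi^*\phi^*-\tau\cdot\id),$$
once one is careful with the twist: indeed, dualizing the resolution \ref{spectralresol} of $\E_\phi$ by $\RHomm(-,\OO_X)$ and using that $\pi^*E$ is locally free gives the two-term complex $\pi^*E^*\xrightarrow{\pi^*\phi^*-\tau\cdot\id}\pi^*E^*\otimes\pi^*K_S$ computing $\Extt^\bullet(\E_\phi,\OO_X)$, and tensoring with $\pi^*K_S^{-1}$ yields exactly the resolution displayed above for $\E_{\phi^*}$. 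Thus the two sheaves agree.

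The main point to verify carefully is the identification of the dual map: one must check that $(\pi^*\phi-\tau\cdot\id)^\vee$ equals $\pi^*\phi^*-\tau\cdot\id$ (up to the expected tensor twist by $\pi^*K_S$). This is because $\tau$ is a global section of $\pi^*K_S$, so multiplication by $\tau$ is self-dual after the appropriate twist, while $\pi^*\phi$ dualizes to $\pi^*\phi^*$ by functoriality of pullback and $\Homm$. The sign conventions match because the dual of a difference is the difference of duals. Once this verification is done, the identification $\E_{\phi^*}\cong \E_\phi^D$ follows immediately from comparing presentations.

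I expect the only mild obstacle to be bookkeeping of the various twists by $\pi^*K_S$ that appear when dualizing across the Calabi--Yau threefold $X=\mathrm{Tot}(K_S)$; the substantive homological input (vanishing of higher $\Extt$'s, purity, the cokernel description) has already been done in Lemma \ref{spectralprojdim}, so the corollary is essentially a formal consequence.
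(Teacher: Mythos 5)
Your argument is correct and follows the same route the paper intends: the corollary is read off from Lemma \ref{spectralprojdim} by dualizing the resolution \ref{spectralresol}, twisting by $\pi^*K_S^{-1}$, and recognizing the result as the spectral resolution of $(E^*,\phi^*)$, which is exactly your comparison of presentations (including the check that $(\pi^*\phi-\tau\cdot\id)^\vee=\pi^*\phi^*-\tau\cdot\id$ up to the $K_S$-twist). The only quibble is the displayed line $\coker(\pi^*\phi^*-\tau\cdot\id)\otimes\pi^*K_S^{-1}\otimes\pi^*K_S$, which conflates the two differently twisted cokernels before you correctly disentangle them in the next sentence; otherwise the proof matches the paper's.
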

    \subsection{$(-1)$ on fibers}
We add the action $(E,\phi)\mapsto (E,-\phi)$, starting again with single sheaves $\E_\phi$: we recover $\phi=\pi_*(\tau \cdot \id)$, where $\tau$ is the tautological section of $\pi^*K_S$, i.e. $\tau_{(s,t)}=t$ for local coordinates $(s,t)$ of $X$.\\
     "$(-1)$ on fibers" is the map given by
    \begin{dfn}
        $$\sigma: X\rightarrow X,$$
        $$(s,t) \mapsto (s,-t).$$
    \end{dfn}
    \begin{lem}
        $\sigma^*\E_\phi=\E_{-\phi}$ 
    \end{lem}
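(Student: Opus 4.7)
The plan is to pull back the resolution (\ref{spectralresol}) via $\sigma$ and identify the result with the defining resolution of $\E_{-\phi}$. Since $\sigma : X \to X$ is an involution, it is flat, so $\sigma^{*}$ is exact and preserves short exact sequences of sheaves. This reduces the statement to a calculation at the level of the two-term resolution.

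First I would observe that $\pi \circ \sigma = \pi$, since $\sigma$ only flips the fiber coordinate. Consequently $\sigma^{*}\pi^{*}E = \pi^{*}E$ and $\sigma^{*}\pi^{*}K_{S}^{-1} = \pi^{*}K_{S}^{-1}$, canonically. Next, because the tautological section $\tau \in \Gamma(X,\pi^{*}K_{S})$ is characterized by $\tau_{(s,t)} = t$, the defining formula $\sigma(s,t)=(s,-t)$ gives $\sigma^{*}\tau = -\tau$. Pulling back the differential of (\ref{spectralresol}) therefore yields
\begin{equation*}
\sigma^{*}\bigl(\pi^{*}\phi - \tau\cdot\id\bigr) \;=\; \pi^{*}\phi + \tau\cdot\id.
\end{equation*}

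Applying $\sigma^{*}$ to the whole resolution then produces the short exact sequence
\begin{equation*}
\begin{tikzcd}[column sep=large]
0 \arrow[r] & \pi^{*}E\otimes K_{S}^{-1} \arrow[r,"\pi^{*}\phi + \tau\cdot\id"] & \pi^{*}E \arrow[r] & \sigma^{*}\E_\phi \arrow[r] & 0.
\end{tikzcd}
\end{equation*}
Composing the left-hand map with $-\id$ on, say, $\pi^{*}E \otimes K_{S}^{-1}$ (an isomorphism, so the cokernel is unchanged) converts the differential into $-\pi^{*}\phi - \tau\cdot\id = \pi^{*}(-\phi) - \tau\cdot\id$. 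This is precisely the defining resolution of $\E_{-\phi}$, so $\sigma^{*}\E_{\phi} \cong \E_{-\phi}$.

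No step presents a real obstacle; the only thing to be careful about is the sign bookkeeping, specifically that $\sigma^{*}\tau = -\tau$ rather than $+\tau$, and that the resulting discrepancy is absorbed by an automorphism of the complex. As an optional sanity check one can verify the statement pointwise via the eigenspace description: the fiber of $\E_{\phi}$ over $(s,t)$ is the $t$-eigenspace of $\phi(s)$, and that of $\sigma^{*}\E_\phi$ over $(s,t)$ is the $(-t)$-eigenspace of $\phi(s)$, which is manifestly the $t$-eigenspace of $-\phi(s)$.
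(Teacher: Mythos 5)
Your proof is correct and follows essentially the same route as the paper: pull back the resolution (\ref{spectralresol}) by the isomorphism $\sigma$, use $\pi\circ\sigma=\pi$ and $\sigma^{*}\tau=-\tau$ to identify the pulled-back differential, and note that the cokernel of $\pi^{*}\phi+\tau\cdot\id$ coincides with that of $\pi^{*}(-\phi)-\tau\cdot\id$ (you make the sign absorption explicit via composition with $-\id$, which the paper leaves implicit). No discrepancies to report.
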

    \begin{proof}
        We show that applying $\sigma^*$ to the resolution \ref{spectralresol}
        \begin{equation*}
        \begin{tikzcd} [column sep=large]
        0 \arrow[r] & \pi^*E \otimes K_S^{-1} \arrow[r, "\pi^*\phi-\tau\cdot \id"]& \pi^*E \arrow[r,"ev"]& \E_\phi \arrow[r] & 0
        \end{tikzcd}
        \end{equation*}
        yields $\E_{-\phi}$.\\
        It is clear that 
        $\sigma^*$ preserves exactness. And as $\sigma^*\tau_{(s,t)}=\tau_{(s,-t)}$, we have $\pi_*(\sigma^*\tau \cdot \id)=-\phi$. Thus,
        \begin{align*}
           &\sigma^*\coker(\pi^*\phi-\tau\cdot \id)=\coker(\pi^*\phi-\sigma^*\tau\cdot \id) \\
           &= \coker(\pi^*\phi+\tau\cdot \id)=\coker(\pi^*(-\phi)-\tau\cdot \id)= \E_{-\phi} 
       \end{align*}
       gives us the result.
    \end{proof}
    \begin{cor}
        The spectral sheaf $\E_{-\phi}$ of $(E,-\phi)$ is given by $\sigma^*\E_\phi$.
    \end{cor}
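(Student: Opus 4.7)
The corollary is an immediate reformulation of the preceding lemma. By the spectral correspondence $\mathbf{Higgs}(S) \cong \mathbf{Coh}_c(X)$, the symbol $\E_{-\phi}$ denotes, by definition, the unique compactly supported sheaf on $X$ whose pushforward along $\pi$ recovers the Higgs pair $(E,-\phi)$. The lemma establishes that this sheaf coincides with $\sigma^*\E_\phi$, so the plan is essentially to state that there is nothing further to prove: the corollary is packaging the lemma in a form convenient for later use (where we will want to start from the pair $(E,-\phi)$ and manufacture its spectral sheaf as $\sigma^*\E_\phi$, rather than starting from $\E_\phi$ and producing a new sheaf).

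If one wishes to verify the corollary without reusing the resolution \ref{spectralresol}, the argument is a direct pushforward computation. Since $\pi \circ \sigma = \pi$ (the involution $\sigma$ preserves fibers of $\pi$ setwise), affine base change gives $\pi_*\sigma^*\E_\phi \cong \pi_*\E_\phi = E$, while the tautological section satisfies $\sigma^*\tau = -\tau$, immediate from the coordinate formula $\sigma(s,t)=(s,-t)$. Hence the induced endomorphism $\pi_*(\tau \cdot \id)$ on $\pi_*\sigma^*\E_\phi$ equals $-\phi$, identifying $\sigma^*\E_\phi$ with the spectral sheaf of $(E,-\phi)$. There is no real obstacle here; the only step worth tracking is the sign reversal $\sigma^*\tau=-\tau$, which the preceding lemma has already used.
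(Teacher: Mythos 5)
Your proposal is correct and matches the paper, which gives no separate proof of this corollary: it is exactly the preceding lemma restated from the viewpoint of the pair $(E,-\phi)$, which is how the paper treats it. Your alternative pushforward verification is also sound --- since $\pi\circ\sigma=\pi$ and $\sigma$ is an isomorphism one has $\pi_*\sigma^*\E_\phi\cong\pi_*\E_\phi=E$, and $\sigma^*\tau=-\tau$ forces the tautological endomorphism to push down to $-\phi$ --- though calling this ``affine base change'' is a slight misnomer; it is just $\sigma_*\sigma^*\cong\id$ together with $\pi_*=\pi_*\circ\sigma_*$.
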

    \section{In families}
    In order to discuss the deformation-obstruction theory, we need to
    phrase above maps for families $\EE$ of spectral sheaves:
    \subsection{Dualizing}
        Let $\EE^\bullet=[\EE^{-1}\xrightarrow{d} \EE^0]$ be a locally free resolution for $\EE$.     
    \begin{claim}\label{claim}
         $\EE^{\bullet, \vee}\otimes \pi^*K_S^{-1} $ is one for $\EE^D=\Extt^1(\EE,\pi^*K_S^{-1})$.
    \end{claim}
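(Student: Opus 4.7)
\smallskip
\noindent The plan is to identify the two-term complex
$$\EE^{\bullet,\vee}\otimes \pi^*K_S^{-1}=\bigl[\EE^{0,\vee}\otimes \pi^*K_S^{-1}\xrightarrow{d^\vee\otimes\id}\EE^{-1,\vee}\otimes \pi^*K_S^{-1}\bigr],$$
placed in degrees $0$ and $1$, with a locally free resolution of $\EE^D$. It is already a complex of locally frees by hypothesis, so the only task is to show that its $0$th cohomology vanishes and its first cohomology equals $\EE^D$.

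First, since $\EE^\bullet$ is a locally free resolution of $\EE$, I would use it to compute
$$\RHomm(\EE,\pi^*K_S^{-1}) \;\simeq\; \Homm^\bullet(\EE^\bullet,\pi^*K_S^{-1}) \;=\; \EE^{\bullet,\vee}\otimes \pi^*K_S^{-1}$$
in $\mathbf{D}^b(X\times \N)$. So the claim is equivalent to the family analog of Lem.~\ref{spectralprojdim}, namely $\RHomm(\EE,\pi^*K_S^{-1}) \simeq \EE^D[-1]$.

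Next, I would upgrade Lem.~\ref{spectralprojdim} to the family setting. This uses that $\EE$ is $\N$-flat and that every fiber $\E_\phi$ has projective dimension $1$ (because each $E$ is locally free, by Sec.~\ref{moduli}). Fiberwise, torsion purity gives $\Homm(\E_\phi,\OO_X)=0$, and the $S_2$ argument from Lem.~\ref{spectralprojdim} gives $\Extt^{\geq 2}(\E_\phi,\OO_X)=0$. Cohomology-and-base-change in the $\N$-direction then lifts these vanishings to the relative $\Extt$-sheaves of $\EE$, showing that $\RHomm(\EE,\pi^*K_S^{-1})$ is concentrated in cohomological degree $1$ with $H^1=\Extt^1(\EE,\pi^*K_S^{-1})=\EE^D$.

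Combining the two steps produces the short exact sequence
$$0 \longrightarrow \EE^{0,\vee}\otimes \pi^*K_S^{-1} \xrightarrow{d^\vee\otimes\id} \EE^{-1,\vee}\otimes \pi^*K_S^{-1} \longrightarrow \EE^D \longrightarrow 0,$$
which exhibits the desired locally free resolution. The delicate point is the family-version argument: I need to confirm that the fiberwise $\Extt$-vanishings propagate uniformly over $\N$, which is precisely where the blanket hypothesis that all $E$ are locally free is essential, as it ensures uniform projective dimension $1$ on every fiber and hence the applicability of cohomology and base change.
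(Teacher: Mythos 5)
Your proof is correct and follows essentially the same route as the paper: dualize the two-term resolution, use $\Homm(\EE,\OO)=0$ and $\Extt^{i>1}(\EE,\OO)=0$ to identify $\coker(d^\vee)$ with $\Extt^1(\EE,\OO)$, and then tensor with $\pi^*K_S^{-1}$. The paper simply asserts these vanishings (the higher ones are automatic from the length-one locally free resolution, and $\Homm(\EE,\OO)=0$ because $\EE$ is a torsion sheaf mapping to a torsion-free one), so your fiberwise/base-change discussion is extra machinery beyond what is needed, but it does no harm.
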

    \begin{proof}
        We have an exact sequence
        $$
        \begin{tikzcd}
            0 \arrow[r] & \EE^{-1} \arrow[r,"d"] &\EE^0 \arrow[r] &\EE \arrow[r] & 0,
        \end{tikzcd}
        $$
    where $\EE^{i}$ is a locally free sheaf.
    Applying $\RHomm(\_,\OO)$ and noting that $\Homm(\EE,\OO)=\Extt^{i>1}(\EE,\OO)=0$ gives 
    $$
        \begin{tikzcd}
        0 \arrow[r] & \EE^{0,\vee} \arrow[r,"d^\vee"] &\EE^{-1,\vee} \arrow[r] &\Extt^1(\EE,\OO) \arrow[r] & 0 .
    \end{tikzcd}
   $$
   Tensoring by $\pi^*K_S^{-1}$ preserves exactness and gives the desired resolution $ \EE^{\bullet,\vee} \otimes \pi^*K_S^{-1} \rightarrow \EE^D$. 
\end{proof}
    Thus, the functor $$\EE^\bullet \mapsto \EE^{\bullet,\vee} \otimes \pi^*K_S^{-1}$$ preserves flatness and defines a classifying map $$D: \N \mapsto \N,$$
    where we recover on closed points above dualizing action $$[\E_\phi] \mapsto [\E_{\phi}^D].$$
    \subsection{$(-1)$ on fibres}
    For the same reason, $$\EE^\bullet \mapsto (\sigma\times \id)^*\EE^\bullet $$
    defines a classifying map $$\sigma: \N \rightarrow \N, $$
    on closed points given by $$\sigma: [\E_\phi] \mapsto [\sigma^*\E_\phi].$$\\
    By the property of universal families, we get an isomorphism $$(\sigma\times \id)^*\EE^\bullet\cong (\id\times \sigma)^*\EE^\bullet$$ over $X\times \N$ and, after applying $(\sigma\times \id)^*$ again on both sides, the tautological identification 
    $$\EE \cong (\sigma \times \sigma)^*\EE,$$
    where by abuse of notation, $\sigma$ is  $(s,t)\mapsto (s,-t)$ on the first factor and $[\E_\phi] \mapsto[ \sigma^*\E_\phi]$ on the second.
    \begin{dfn}\label{sigmalinear}
        To keep everything simple, we denote the resulting map as $$\sigma_*: \EE^\bullet \cong \sigma^*\EE^\bullet; \; a \mapsto \sigma_*a.$$
        Taking duals and using that $\sigma^{*,2}=\id$, this is equivalently given as $\sigma_*: \EE^\vee \cong \sigma^*\EE^\vee.$
    \end{dfn}
    \subsection{$\iota$ for spectral sheaves}
        The invariants $\ch(E)$ of the moduli of stable Higgs bundles $\N$ fix those for the spectral sheaves $\ch(\E_\phi)$. And since stability of $(E,\phi)$ coincides with the one of $\E_\phi$ (see \cite[2.9]{TT1}), $\N$ is equivalently a moduli for stable spectral sheaves $\E_\phi$ on $X$.    
    \begin{dfn}
        Accordingly, the involution $$\iota: \N \rightarrow \N$$
        is in terms of spectral sheaves given as
        $$\E_\phi \mapsto \Extt^1(\sigma^*\E_\phi, \pi^*K_S^{-1}),$$
        and we sometimes write $\iota(\E_\phi)=\E_{-\phi^*}$.
    \end{dfn}
    Combining everything from this section, we find:
    \begin{prop}\label{spectralinvolution}
        The spectral sheaf of $(E^*,-\phi^*)$ is given by $$\E_{-\phi^*}=\Extt^1(\sigma^*\E_\phi,\pi^*K_S^{-1}).$$
    \end{prop}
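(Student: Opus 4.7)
The plan is to exploit the decomposition $\iota = D \circ \sigma$ recorded in the definition just before Section~\ref{spectral} and to simply compose the two previous corollaries that identify the action of each factor on spectral sheaves. Since both statements have already been proved for single sheaves, no new geometric input is required; the proposition is a formal consequence of stringing the two identifications together and checking that the order in which $D$ and $\sigma$ are applied matches the sign conventions.

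Concretely, I would start from $(E,\phi)$ and first apply $\sigma$ to obtain $(E,-\phi)$. By the corollary at the end of Section~\ref{spectral} on the $(-1)$-on-fibers map, the spectral sheaf of $(E,-\phi)$ is $\sigma^*\E_\phi$. Next I apply $D$ to $(E,-\phi)$, producing the pair $(E^*,(-\phi)^*) = (E^*,-\phi^*)$ by linearity of the dual on Higgs fields. By Lemma~\ref{spectralprojdim} and the subsequent corollary, the spectral sheaf of $(E^*,(-\phi)^*)$ is the dual of the spectral sheaf of $(E,-\phi)$, namely $\Extt^1(\sigma^*\E_\phi,\pi^*K_S^{-1})$. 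Putting this together yields
\[
\E_{-\phi^*} \;=\; \Extt^1(\sigma^*\E_\phi,\pi^*K_S^{-1}),
\]
which is the claim.

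The only small point that must be verified is the compatibility of the duality lemma, which was proved for a generic Higgs pair $(E,\phi)$, with the substitution $\phi \leadsto -\phi$: since the construction $\E_{\psi} \mapsto \Extt^1(\E_\psi,\pi^*K_S^{-1}) = \E_{\psi^*}$ is natural in $\psi$, this is immediate, and there is no subtlety with signs. No step is genuinely hard here; the content of the proposition lies in the preparatory lemmas of this section, and the proof itself is essentially a one-line composition once both factors have been translated into the language of spectral sheaves.
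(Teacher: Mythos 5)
Your argument is correct and is essentially the paper's own: the proposition is stated there as ``combining everything from this section,'' i.e.\ exactly the composition of the corollary $\E_{-\phi}\cong\sigma^*\E_\phi$ with the corollary $\E_{\psi^*}\cong\Extt^1(\E_\psi,\pi^*K_S^{-1})$, following the decomposition $\iota=D\circ\sigma$. (The paper's later Grothendieck--Verdier computation appears only in the proof of the subsequent corollary as an independent check, so your one-line composition matches the intended proof.)
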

    Prop. \ref{spectralinvolution} implies immediately:
    \begin{cor}
        There is a commutative square relating the two notions of $\iota:$
        \begin{equation}\label{diagraminvolution}
	   \begin{tikzcd}
	       X\times \mathcal{N} \arrow[r,"\id\times \iota"]\arrow[d,"\pi \times \id"] & X\times \mathcal{N} \arrow[d,"\pi \times \id"]\\
	       S\times \mathcal{N} \arrow[,r,"\id\times\iota"] &S\times \mathcal{N}. \\
	   \end{tikzcd}
	   \end{equation}
        \end{cor}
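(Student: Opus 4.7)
The plan is to reduce the commutativity of the square to the essentially tautological observation that the two horizontal $\id\times\iota$'s name the same morphism $\iota:\N\to\N$, after which the diagram collapses to a product identity.

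First I would check that the classifying map $\iota$ appearing on the top row (defined via $\EE\mapsto \Extt^1((\sigma\times\id)^*\EE,\pi^*K_S^{-1})$ on the spectral universal family on $X\times\N$) coincides with the classifying map $\iota$ on the bottom row (defined via $(\strE,\Phi)\mapsto(\strE^*,-\Phi^*)$ on $S\times\N$). Proposition \ref{spectralinvolution} supplies exactly this identification: on any closed point $n\in\N$ corresponding to $(E,\phi)$, the spectral description produces $\Extt^1(\sigma^*\E_\phi,\pi^*K_S^{-1})=\E_{-\phi^*}$, which under the spectral equivalence $\pi_*:\mathbf{Coh}_c(X)\xrightarrow{\sim}\mathbf{Higgs}(S)$ corresponds to $(E^*,-\phi^*)$. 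Hence the two maps agree on closed points, and since both are classifying maps for flat families that are canonically isomorphic (via the sheaf-level isomorphism of Prop. \ref{spectralinvolution}), the universal property of $\N$ forces the two scheme morphisms to coincide.

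Once both horizontal arrows are identified as a single morphism $\iota:\N\to\N$ and both vertical arrows as the product $\pi\times\id_\N$, the square becomes the identity
\[
(\pi\times\id_\N)\circ(\id_X\times\iota)\;=\;\pi\times\iota\;=\;(\id_S\times\iota)\circ(\pi\times\id_\N),
\]
which holds by bifunctoriality of the Cartesian product of morphisms; no further computation is required.

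I expect the only delicate step to be the upgrade from pointwise agreement of the two $\iota$'s to their agreement as scheme morphisms. This is handled by applying the universal property of $\N$ to the $S\times\N$-flat family $\pi_*\Extt^1((\sigma\times\id)^*\EE,\pi^*K_S^{-1})$, which by the proof of Prop. \ref{spectralinvolution} is canonically isomorphic, as a family of Higgs pairs, to $(\strE^*,-\Phi^*)$; uniqueness of the induced classifying map into $\N$ then yields the required equality.
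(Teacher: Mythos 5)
Your proposal is correct in outline but takes a different route from the paper. You treat the square as formally trivial once the two horizontal arrows are identified as the same morphism $\iota\colon\N\to\N$, and you obtain that identification by citing Prop.~\ref{spectralinvolution} (pointwise) plus the universal property of $\N$ applied to a family-level isomorphism. The paper instead makes the corollary carry the substantive verification: it applies Grothendieck--Verdier duality along the affine morphism $\pi$ (using $\pi\circ\sigma=\pi$ and $\dim(\pi)=1$) to show directly that $\pi_*\Extt^1(\sigma^*\E_\phi,\pi^*K_S^{-1})\cong E^*$ and that the tautological section pushes down to $-\phi^*$, i.e.\ that the spectral-side involution really does push forward to $(E,\phi)\mapsto(E^*,-\phi^*)$; this is exactly the compatibility the square is meant to encode. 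Your approach buys brevity and matches the paper's own remark that the corollary follows ``immediately'' from the proposition, which in turn rests on the resolution arguments of the preceding section; the paper's approach gives an independent, resolution-free check via duality. One caveat: the step you flag as delicate --- upgrading pointwise agreement to agreement of scheme morphisms --- is where the real work sits. The family-level isomorphism $\pi_*\Extt^1((\sigma\times\id)^*\EE,\pi^*K_S^{-1})\cong(\strE^*,-\Phi^*)$ that you invoke is precisely the relative form of the paper's Grothendieck--Verdier computation; it is not literally contained in Prop.~\ref{spectralinvolution} as stated (which concerns single sheaves), so it should be proved (by the relative duality argument) rather than attributed to ``the proof of'' the proposition, especially since $\N$ need not be reduced and closed-point agreement alone does not determine the morphism. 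One should also be mindful that the universal family may exist only as a twisted family, so the classifying-map argument needs the same care the paper takes elsewhere.
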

        \begin{proof}
        We are left to show that $\sigma^*\E_\phi^D$ is the spectral sheaf for $(E^*,-\phi^*)$. This can be seen by applying Grothendieck-Verdier duality to $\pi$, which gives $$\R\pi_*\RHomm(\sigma^*\E_\phi,\pi^*K_S^{-1})[\dim(\pi)]\cong \RHomm(\R \pi_*\sigma^* \E_\phi,\OO_S).$$ 
        By Lem. \ref{spectralprojdim}, we have that $\RHomm(\sigma^*\E_\phi,\pi^*K_S^{-1})[1]\cong \Extt^1(\sigma^*\E_\phi,\pi^*K_S^{-1})$ and $\RHomm(E,\OO_S)=E^*$. 
        As $\pi_*$ is affine, $\pi\circ \sigma=\pi$ and $\dim(\pi)=1$, the duality simplifies to $$\pi_*(\Extt^1(\E_\phi,\pi^*K_S^{-1}))\cong E^*.$$
        Furthermore, as the action of the tautological section $(\tau\cdot \id)$ on $\E_\phi^D$ pushes down to $\phi^*$, we get that $\pi_*(\sigma^*(\tau\cdot \id))=-\phi^*$.
        \end{proof}
    \section{The virtual differential}\label{virtualdiff}
    We split again $\iota=\sigma \circ D$ and deal with each map separately. We start with $D$.
    \subsection{Duals}
    Recalling $$ \E_\phi \mapsto \E^D:= \Extt^1(\E_\phi,\pi^*K_S^{-1}),$$
    we claim:
    \begin{claim}
     There is a lift of $D$ to a map $$\mathbf{R}\Homm_{p_X}(\EE,\EE)[1] \rightarrow \mathbf{R}\Homm_{p_X}(\EE^D,\EE^D)[1],$$
     compatible with the Atiyah class.
    \end{claim}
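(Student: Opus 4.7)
The plan is to construct the lift directly on the level of a locally free resolution of $\EE$ and then verify the compatibility with the Atiyah class via its naturality. By Claim \ref{claim}, if $\EE^\bullet=[\EE^{-1}\to\EE^0]$ is a locally free resolution of $\EE$, then $\EE^{\bullet,\vee}\otimes\pi^*K_S^{-1}$ is a locally free resolution of $\EE^D$, so both sides of the desired map can be computed from these two resolutions.

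The contravariant functor $\Homm(-,\pi^*K_S^{-1})$ is exact on locally free sheaves, and applying it to both arguments of an endomorphism $\alpha:\EE^\bullet\to\EE^\bullet$ produces $\alpha^\vee\otimes\id$ on $\EE^{\bullet,\vee}\otimes\pi^*K_S^{-1}$. This gives a natural map
\begin{equation*}
\Homm(\EE^\bullet,\EE^\bullet)\longrightarrow\Homm\bigl(\EE^{\bullet,\vee}\otimes\pi^*K_S^{-1},\,\EE^{\bullet,\vee}\otimes\pi^*K_S^{-1}\bigr),
\end{equation*}
in which the $\pi^*K_S^{-1}$ twist cancels between source and target on the right-hand side. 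Interpreting both sides in the derived category via the two resolutions identifies this with a morphism $\RHomm(\EE,\EE)\to\RHomm(\EE^D,\EE^D)$, and pushing down by $p_X$ and shifting by $[1]$ produces the desired lift $D_*$.

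For the Atiyah class compatibility, I would exploit the fact that $D:\N\to\N$ is the classifying map for $\EE^D$, so the universal property yields a canonical isomorphism $(\id\times D)^*\EE\cong\EE^D$. Naturality of the Atiyah class then forces $\At_{\EE^D,\N}$ to equal $(\id\times D)^*\At_{\EE,\N}$ composed with the derivative $dD:D^*\mathbf{L}_\N\to\mathbf{L}_\N$, and unwinding definitions shows that this is precisely the image of $\At_{\EE,\N}$ under $D_*$. The main obstacle is making the naturality statement precise at the level of derived endomorphisms rather than just on $\Ext^1$-classes; this requires checking that the bi-duality isomorphism used to define $D_*$ on the locally free resolution agrees with the functorial identification $(\id\times D)^*\EE\cong\EE^D$, and handling the case of a twisted universal family by observing that the gerbe twists on $\EE$ and $\EE^D$ are inverse, so that $\RHomm_{p_X}(\EE^D,\EE^D)$ descends to an untwisted object on $\N$ and $D_*$ is an honest morphism there.
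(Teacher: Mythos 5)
Your construction of the map itself follows the same route as the paper: use Claim \ref{claim} to resolve $\EE^D$ by $\EE^{\bullet,\vee}\otimes\pi^*K_S^{-1}$, note that the twist cancels in the endomorphism complex, identify $\RHomm(\EE^D,\EE^D)$ with $\EE^{\bullet}\otimes^L\EE^{\bullet,\vee}$, transpose, and push down by $p_X$; and your appeal to the universal property of the classifying map $D$ together with naturality of the Atiyah class is exactly the paper's chain $(D^*\At_{\EE,\N})\circ D_*=\At_{D^*\EE,\N}=\At_{\EE^D,\N}$. So the skeleton is right.

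However, there is a genuine gap at the one non-formal point, which you defer to ``unwinding definitions.'' The lift cannot be the plain transpose $\alpha\mapsto\alpha^\vee$: the standard fact is $\At_{\EE^D,\N}=-(\At_{\EE,\N})^{\vee}$, i.e.\ dualizing flips the sign of the Atiyah class, so with the unsigned transpose the square in Claim \ref{Deq} only \emph{anti}-commutes and the asserted compatibility fails. The paper's lift is defined as $f\mapsto -f^{\vee}$, equivalently the signed swap $a\otimes b\mapsto -b\otimes a$ on $\EE^{\bullet,\vee}\otimes^L\EE^{\bullet}\to\EE^{\bullet}\otimes^L\EE^{\bullet,\vee}$, and that sign is precisely what makes the diagram with $D_*$ and the Atiyah classes commute (it is also what later combines with $\sigma$ to produce the involution $(E,\phi)\mapsto(E^*,-\phi^*)$ rather than $(E^*,\phi^*)$). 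Relatedly, when you transpose at the level of the tensor-product resolution you should keep track of the Koszul signs in swapping the two factors of the total complex; the paper's explicit formula packages this, whereas ``produces $\alpha^\vee\otimes\id$'' glosses over it. Your remark on twisted universal families is reasonable but tangential: the paper sidesteps it by noting that $\RHomm_{p_X}(\EE,\EE)$ is always a well-defined object on $\N$, so no gerbe argument is needed for this claim.
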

    
\begin{rmk}\label{rmk}
    This gives a locally free resolution $$\EE^{\bullet, \vee}\otimes^L \EE^\bullet \rightarrow \RHomm(\EE,\EE),$$ where we assume that $H^{i}(X_n,\EE^{\bullet, \vee}\otimes^L \EE^\bullet|_{X_n})=0$ for $i>0$. Then their higher direct images $\mathbf{R}^{i}p_{X,*}$ vanish and their push-downs to $\N$ are again locally free.
\end{rmk}
\begin{cor}
    $\RHomm(\EE^D,\EE^D)$ is represented by $\EE^{\bullet}\otimes ^L \EE^{\bullet, \vee}$.
\end{cor}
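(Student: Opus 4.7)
The plan is to leverage Claim \ref{claim}, which already provides a two-term locally free resolution $\EE^{\bullet,\vee} \otimes \pi^*K_S^{-1} \to \EE^D$ on $X \times \N$. Since this resolution consists entirely of locally free sheaves, its $\OO_{X \times \N}$-linear dual computes $(\EE^D)^\vee$ in the derived category, and it simplifies via the canonical identification $\EE^{\bullet,\vee,\vee} \cong \EE^\bullet$ together with $(\pi^*K_S^{-1})^\vee \cong \pi^*K_S$ to yield a locally free resolution $\EE^\bullet \otimes \pi^*K_S \to (\EE^D)^\vee$.

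Next, I would invoke the standard identification $\RHomm(A,B) \cong A^\vee \otimes^L B$, valid for perfect $A$, with $A = B = \EE^D$. Substituting the two resolutions gives
$$\RHomm(\EE^D,\EE^D) \cong (\EE^\bullet \otimes \pi^*K_S) \otimes^L (\EE^{\bullet,\vee} \otimes \pi^*K_S^{-1}) \cong \EE^\bullet \otimes^L \EE^{\bullet,\vee},$$
where at the last step I cancel the factor $\pi^*K_S \otimes \pi^*K_S^{-1} \cong \OO_X$. Because $\EE^\bullet$ and $\EE^{\bullet,\vee}$ are bounded complexes of locally frees, the derived tensor product coincides with the ordinary one, so the right-hand side is represented by an honest complex of locally free sheaves, as desired. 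The result is then a direct consequence paralleling Remark \ref{rmk}, just with the roles of $\EE^\bullet$ and $\EE^{\bullet,\vee}$ swapped.

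The only technical point requiring care is that the naive dualization of the resolution of $\EE^D$ genuinely computes the derived dual $(\EE^D)^\vee$. This is unproblematic here because every term of $\EE^{\bullet,\vee}\otimes \pi^*K_S^{-1}$ is locally free, so $\Extt^i(-,\OO_X)$ vanishes for $i>0$ and the functor $(-)^\vee$ is exact on this complex. No hypotheses beyond those already in force, namely flatness of $\EE$ over $\N$ and local freeness of $\EE^\bullet$, are needed, and the symmetry of the final expression under swapping $\EE \leftrightarrow \EE^D$ is consistent with what one expects from the involution $D$ preserving the deformation-obstruction structure.
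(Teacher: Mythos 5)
Your argument is correct and follows essentially the same route as the paper: use the resolution of $\EE^D$ from Claim \ref{claim}, represent $\RHomm(\EE^D,\EE^D)$ by the termwise dual of that resolution tensored against it, and cancel $\pi^*K_S\otimes\pi^*K_S^{-1}\cong\OO$ together with $\EE^{\bullet,\vee,\vee}\cong\EE^\bullet$. The only quibble is terminological: since $\EE^D$ is torsion, its derived dual $(\EE^D)^\vee$ is not a sheaf in degree $0$, so $\EE^\bullet\otimes\pi^*K_S$ should be called a representative of the derived dual rather than a ``locally free resolution'' of it --- which is in any case all your argument actually uses.
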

As $\RHomm(\EE,\EE)$ is represented by $\EE^{\bullet,\vee}\otimes^L \EE^\bullet$,  above Lem. \ref{claim} tells us that $\RHomm(\EE^D,\EE^D)$ is represented by $$(\EE^{\bullet, \vee}\otimes \pi^*K_S^{-1})^{\vee} \otimes^L (\EE^{\bullet, \vee}\otimes \pi^*K_S^{-1})\cong \EE^{\bullet,\vee, \vee }\otimes \pi^*K_S \otimes^L \EE^{\bullet,\vee}\otimes \pi^*K_S^{-1} \cong \EE^\bullet \otimes^L \EE^{\bullet,\vee},$$
where we used the natural maps $\pi^*K_S\otimes \pi^*K_S^{-1} \cong \OO$ and $\EE^\bullet \cong \EE^{\bullet, \vee,\vee}$.\\
\begin{dfn}
There is a dual action $$\RHomm(\EE,\EE) \rightarrow \RHomm(\EE^D,\EE^D)$$
$$f \mapsto -f^\vee.$$
In terms of the above presentations,
this is swapping the factors
\begin{align*}
        \EE^{\bullet,\vee}\otimes^L \EE^\bullet \rightarrow  \EE^{\bullet}\otimes^L \EE^{\bullet,\vee},
    \end{align*}
    \begin{align*}
        a \otimes b \mapsto -b \otimes a,
    \end{align*}
\end{dfn}
    Thus, applying $\mathbf{R}p_{X,*}$ to the dual action (swapping the factors)
    gives a morphism $$-(\_)^\mathfrak{\vee}: \mathbf{R}\Homm_{p_X}(\EE,\EE)[1] \rightarrow \mathbf{R}\Homm_{p_X}(\EE^D,\EE^D)[1], $$
    which we call the virtual differential of $D$ (omitting the shift from notation).  
Furthermore, this map is compatible with $D_*: \mathbf{T}_\N \rightarrow D^*\mathbf{T}_\N$ via partial Atiyah $\At_{\EE,\N}$:
\begin{claim}\label{Deq}
The following diagram commutes
    \begin{equation*}
    \begin{tikzcd}
	   [column sep=14ex]
	   \RHomm_{p_X}(\EE,\EE)[1] \arrow[r,"-(\_)^\vee"] &\RHomm_{p_X}(\EE^D,\EE^D)[1]\\
	   \mathbf{T}_{\N}\arrow[u,"\At_{\EE,\N}"] \arrow[r,"D_*"] & D^*\mathbf{T}_{\N} \arrow[u,"D^*\At_{\EE,\N}"].
	\end{tikzcd}
\end{equation*}
\end{claim}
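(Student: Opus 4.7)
The plan is to reduce the commutativity of the square to the functoriality of the Atiyah class under pullback along classifying maps, combined with a direct sign computation at the level of the dualizing functor on locally free resolutions.

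First, I unravel the right-hand vertical. By flat base change along $D:\N\to\N$ applied to $p_X$, the self-map $\id\times D$ of $X\times\N$ yields a canonical isomorphism
\begin{equation*}
D^*\RHomm_{p_X}(\EE,\EE) \;\cong\; \RHomm_{p_X}\bigl((\id\times D)^*\EE,(\id\times D)^*\EE\bigr) \;\cong\; \RHomm_{p_X}(\EE^D,\EE^D),
\end{equation*}
and under this identification $D^*\At_{\EE,\N}$ becomes the partial Atiyah class $\At_{\EE^D,\N}$. This is just the naturality of Atiyah classes under arbitrary pullback, applied to the fiber square expressing the base change. So the square reduces to the assertion
\begin{equation*}
-(\_)^\vee\circ \At_{\EE,\N}\;=\;\At_{\EE^D,\N}\circ D_*,
\end{equation*}
which, since $D_*$ is the identification $\mathbf{T}_\N\cong D^*\mathbf{T}_\N$ induced by the classifying isomorphism, amounts to the assertion that the dualizing/twisting functor on resolutions sends $\At_{\EE,\N}$ to $\At_{\EE^D,\N}$ up to a global sign.

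Second, I make the identification explicit. Using Claim \ref{claim} and its corollary, I model $\RHomm(\EE,\EE)$ by $\EE^{\bullet,\vee}\otimes^L\EE^\bullet$ and $\RHomm(\EE^D,\EE^D)$ by $\EE^\bullet\otimes^L\EE^{\bullet,\vee}$ (the two $\pi^*K_S^{\pm 1}$ twists cancel). The natural map between these two is the factor swap $a\otimes b\mapsto b\otimes a$, and the map $-(\_)^\vee$ in the statement is precisely this swap composed with a global sign.

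Third, the heart of the matter is the sign. I invoke the standard identity $\At_{F^\vee}=-(\At_F)^\vee$ for a perfect complex $F$, where the minus comes from Leibniz applied to the evaluation pairing $F\otimes F^\vee\to\OO$ (so that $\At_F\otimes\id+\id\otimes\At_{F^\vee}=0$). Tensoring with a sheaf pulled back from $S$ (here $\pi^*K_S^{-1}$) does not affect the partial Atiyah class in the $\N$-direction. Combining these yields $\At_{\EE^D,\N}=-(\At_{\EE,\N})^\vee$, which under the resolutions above is exactly the factor swap with a sign, closing the diagram.

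The main obstacle is making the Leibniz sign rigorous in families with the chosen conventions. The cleanest route is to work with the Atiyah extension
\begin{equation*}
0\to \EE\otimes p_\N^*\Omega_\N\to \mathcal{P}_\N(\EE)\to \EE\to 0
\end{equation*}
representing $\At_{\EE,\N}$, then apply $\RHomm(\_,\pi^*K_S^{-1})[1]$ to obtain an extension of $\EE^D$ by $\EE^D\otimes p_\N^*\Omega_\N$ representing $\At_{\EE^D,\N}$, and compare the resulting extension class with the factor-swap-plus-sign construction on the tensor-product resolution. Once this identification is in place at the level of extensions the desired equality of morphisms $\mathbf{T}_\N\to\RHomm_{p_X}(\EE^D,\EE^D)[1]$ follows, and composing with $D_*$ via the universal property $(\id\times D)^*\EE\cong\EE^D$ finishes the proof.
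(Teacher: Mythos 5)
Your proposal is correct and follows essentially the same route as the paper: reduce via functoriality of the Atiyah class along the classifying map, $(D^*\At_{\EE,\N})\circ D_*=\At_{D^*\EE,\N}=\At_{\EE^D,\N}$, and then invoke the sign identity $\At_{\EE^D,\N}=-(\At_{\EE,\N})^\vee$. The only difference is that you sketch a justification (Leibniz rule on the evaluation pairing, invariance under the $\pi^*K_S^{-1}$ twist, and the dualized Atiyah extension) of what the paper simply cites as a standard fact.
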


\begin{proof}
    It is a standard fact that $\At_{\EE^D,\N}=-(\At_{\EE,\N})^\vee$ and the claim follows from functoriality of Atiyah classes, as $$(D^*\At_{\EE,\N})\circ D_*=\At_{D^*\EE,\N}=\At_{\EE^D,\N}.$$ 
\end{proof}
\subsection{$(-1)$ on fibres}
    We continue with lifting $\sigma$ to $\RHomm_{p_X}(\EE,\EE)[1]$.\\
    As discussed, $\EE \cong \sigma^*\EE$ defined in  \ref{sigmalinear} induces an isomorphism $$\EE^\vee\otimes^L \EE \xrightarrow{\sim} \sigma^*\EE^\vee\otimes^L \sigma^*\EE.$$ Pushing down to $\N$ lifts $\sigma$ to an action on the virtual tangent complex $$ \sigma_*: \RHomm_{p_X}(\EE,\EE)[1] \rightarrow \RHomm_{p_X}(\sigma^*\EE,\sigma^*\EE)[1].$$
    This is compatible with Atiyah classes, as seen it \cite[6.5.]{Sch}. Here, $\sigma$ defines a different action on the Higgs field, but this does not affect the proof in any way. Alternatively, this can be seen as a special case of the $\mathbf{C}^\times$-equivariance of the Atiyah class, as proven in \cite[4.3]{R}.
    \begin{claim}\label{sigmaeq}
        $\sigma_*$ is compatible with the differential map $\sigma_*: \mathbf{T}_\N \rightarrow \sigma^*\mathbf{T}_\N$ via Atiyah classes, i.e. 
        \begin{equation} 
	\begin{tikzcd}
	[column sep=14ex]
	\mathbf{R}\Homm_{p_X}(\EE,\EE)[1] \arrow[r,"\sigma_*"] &\mathbf{R}\Homm_{p_X}(\sigma^*\EE,\sigma^*\EE)[1]\\
	\mathbf{T}_{\N}\arrow[u,"\At_{\EE,\N}"] \arrow[r,"\sigma_*"] & \sigma^*\mathbf{T}_{\N} \arrow[u,"\sigma^*\At_{\EE,\N}"]
	\end{tikzcd}
	\end{equation}	
    commutes.
    \end{claim}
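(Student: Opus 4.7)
The plan is to reduce the commutativity of the diagram to the functoriality of the partial Atiyah class under base change on $\N$, combined with the tautological identification $\EE \cong (\sigma \times \sigma)^*\EE$ underlying $\sigma_*$ in Def. \ref{sigmalinear}.

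First I would decompose $\sigma^* = (\sigma \times \id)^* \circ (\id \times \sigma)^*$ and invoke the standard naturality of the partial Atiyah class under the base change $\id \times \sigma : X \times \N \to X \times \N$: one has
$$(\id \times \sigma)^*\At_{\EE,\N} \;=\; \At_{(\id \times \sigma)^*\EE,\, \N}$$
in $\mathbf{D}^b(X \times \N)$, modulo the canonical identification $(\id \times \sigma)^* p_\N^*\mathbf{L}_\N \simeq p_\N^*\sigma^*\mathbf{L}_\N$ and the differential $\sigma^*\mathbf{L}_\N \to \mathbf{L}_\N$, which dualizes to the bottom map $\sigma_* : \mathbf{T}_\N \to \sigma^*\mathbf{T}_\N$ of the square.

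Next I would rewrite $(\id \times \sigma)^*\EE$ as $(\sigma \times \id)^*\EE$ using the isomorphism obtained from $\EE \cong (\sigma \times \sigma)^*\EE$ after applying $(\sigma \times \id)^*$; this identification is precisely the $\sigma_*$ of Def. \ref{sigmalinear}. Because $\sigma \times \id$ acts only on the $X$-factor and $p_\N^*\mathbf{L}_\N$ is pulled back from $\N$, the functor $(\sigma \times \id)^*$ acts trivially on $p_\N^*\mathbf{L}_\N$; moreover $p_X \circ (\sigma \times \id) = p_X$, so $(\sigma \times \id)^*$ is compatible with $\R p_{X,*}$. Pushing forward by $p_X$ and shifting converts the previous identity into the required commutative square with top-right object $\RHomm_{p_X}(\sigma^*\EE,\sigma^*\EE)[1]$ and right vertical arrow $\sigma^*\At_{\EE,\N}$.

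The main obstacle will be the bookkeeping among the three pullbacks $(\sigma \times \id)^*$, $(\id \times \sigma)^*$, and $(\sigma \times \sigma)^*$ of $\EE$, and checking that the canonical comparison isomorphisms among them intertwine with the Atiyah class as stated. This verification has already been performed in detail in \cite[6.5]{Sch} for the $\C^\times$-scaling action on Higgs fields; the present claim is formally the specialization of that argument to the subgroup $\{\pm 1\} \subset \C^\times$ and none of the steps depend on the specific weight, as the author notes. Alternatively, the identity is the value at $-1$ of the $\C^\times$-equivariance of the Atiyah class proved in \cite[4.3]{R}.
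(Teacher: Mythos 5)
Your proposal is correct and follows essentially the same route as the paper, which offers no independent argument for Claim \ref{sigmaeq} beyond the remark preceding it: compatibility is deferred to \cite[6.5]{Sch} (with the observation that the modified action on the Higgs field does not affect that proof) or, equivalently, to the $\C^\times$-equivariance of the Atiyah class in \cite[4.3]{R}, of which $\sigma$ is the value at $-1$. Your explicit bookkeeping --- base-change functoriality of the partial Atiyah class along $\id\times\sigma$, the tautological identification $\EE\cong(\sigma\times\sigma)^*\EE$, and $p_X\circ(\sigma\times\id)=p_X$ --- is exactly the verification those references carry out, so if anything you supply more detail than the paper does.
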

\section{Equivariant sheaves}
\subsection{Equivariant objects}
In order to describe the virtual structure on $\N^\iota$, we need a $\iota$-equivariant structure on the Atiyah class $$(\At_{\EE,\N}: \mathbf{T}_\N \rightarrow \RHomm_{p_X}(\EE,\EE)[1]) \in \mathbf{D}^b(\N)^{\langle \iota \rangle}$$ We briefly recall the notion of equivariant complexes on $\N$ in this concrete setup for any involution $\iota$ on $\N$.

\begin{dfn}\label{equivdef}
	We call $\At_{\EE}$ $\iota$-equivariant if there is a commutative diagram 
	\begin{equation} 
	\begin{tikzcd}
	[column sep=14ex]
	\mathbf{R}\Homm_{p_X}(\EE,\EE)[1] \arrow[r,"\iota_*"] &\mathbf{R}\Homm_{p_X}(\iota^*\EE,\iota^*\EE)[1]\\
	\mathbf{T}_{\N}\arrow[u,"\At_{\EE}"] \arrow[r,"\iota_*"] & \iota^*\mathbf{T}_{\N} \arrow[u,"\iota^*\At_{\EE}"]
	\end{tikzcd}
	\end{equation}	
	such that the triangle of differentials
	\begin{center}
		\begin{tikzcd}
		\mathbf{T}_\N \arrow[r,"\iota_*"] \arrow[dr,equal] & \iota^*\mathbf{T}_\N \arrow[d,"\iota^*(\iota_*)"]\\
		& (\iota^{*,2})\mathbf{T}_\N \\
		\end{tikzcd}
	\end{center}
    map to the virtual differentials
	\begin{center}
		\begin{tikzcd}
		\mathbf{R}\mathcal{H}om_{p_X}(\EE,\EE)[1] \arrow[r,"\iota_*"] \arrow[dr,equal]&
		\mathbf{R}\mathcal{H}om_{p_X}(\iota^*\EE,\iota^*\EE)[1] \arrow[d,"\iota^*(\iota_*)"]\\
		&\mathbf{R}\mathcal{H}om_{p_X}(\iota^{2,*}\EE,\iota^{2,*}\EE)[1] \\
		\end{tikzcd}
	\end{center}
	via $\At_{\EE}$ and $\iota^*\At_{\EE}$. Combining the claims \ref{sigmaeq} and \ref{Deq} we conclude:
\end{dfn}
    \begin{cor}
        $\At_{\EE,\N}$ is equivariant with respect to both actions $\sigma$ and $D$.
    \end{cor}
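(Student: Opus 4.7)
The plan is to verify both parts of the equivariance condition in Def. \ref{equivdef}. The commutativity of the main square has already been established in each case: for $\sigma$ this is Claim \ref{sigmaeq}, and for $D$ this is Claim \ref{Deq}. What remains is the triangle condition, namely that the double composite $\iota^*(\iota_*) \circ \iota_*$ on $\mathbf{T}_\N$ (which equals $\id$ since $\iota^2 = \id_\N$) matches, via the Atiyah class, the double composite of the virtual differential on $\RHomm_{p_X}(\EE,\EE)[1]$. Since $\sigma$ and $D$ are independent involutions, I would handle them separately.

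For $\sigma$, recall from Def. \ref{sigmalinear} that $\sigma_*: \EE^\bullet \cong (\sigma\times\sigma)^*\EE^\bullet$ is the tautological linearization coming from universality together with $\sigma^2 = \id_X$. Iterating, the composite $\sigma^*(\sigma_*) \circ \sigma_*$ lands in $(\sigma\times\sigma)^{*,2}\EE^\bullet \cong \EE^\bullet$ via the canonical identification, and is the identity by construction; pushing down via $\mathbf{R}p_{X,*}$ and tensoring then yields the required identity on $\RHomm_{p_X}(\EE,\EE)[1]$. For $D$, the virtual differential is the sign-swap $T: a \otimes b \mapsto -b \otimes a$ on $\EE^{\bullet,\vee} \otimes^L \EE^\bullet$, rewritten as a map to $\EE^\bullet \otimes^L \EE^{\bullet,\vee}$ via $\pi^*K_S \otimes \pi^*K_S^{-1} \cong \OO$ and the canonical $\EE^\bullet \cong \EE^{\bullet,\vee,\vee}$ (using that bundles are reflexive). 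Iterating gives $a \otimes b \mapsto -b \otimes a \mapsto -(-a \otimes b) = a \otimes b$, which matches $\id_{\mathbf{T}_\N}$ corresponding to $D^2 = \id$.

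The main technical point will be checking that the canonical identifications $(\sigma \times \sigma)^{*,2}\EE \cong \EE$ and $\EE^{\bullet,\vee,\vee} \cong \EE^\bullet$ are compatible with the Atiyah class in the sense demanded by Def. \ref{equivdef}, i.e.\ that no hidden scalar or sign appears when iterating. For $\sigma$ this can be seen as a special case of the $\mathbf{C}^\times$-equivariance of the Atiyah class from \cite[4.3]{R} restricted to $\mathbf{Z}/2 \hookrightarrow \mathbf{C}^\times$; for $D$ it reduces to the standard identity $\At_{\EE^{\vee\vee}} = (\At_{\EE^\vee})^\vee$ combined with naturality. Once these compatibilities are in place, combining them with Claims \ref{sigmaeq} and \ref{Deq} gives the corollary directly.
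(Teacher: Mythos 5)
Your proposal is correct and follows essentially the same route as the paper: the corollary is stated there as an immediate consequence of combining Claim \ref{sigmaeq} (for $\sigma$) and Claim \ref{Deq} (for $D$), which is exactly the backbone of your argument. The only difference is that you additionally spell out the triangle/involutivity condition of Def. \ref{equivdef} for each of $\sigma$ and $D$ separately (via the canonical identifications $(\sigma\times\sigma)^{*,2}\EE\cong\EE$ and $\EE^{\bullet,\vee,\vee}\cong\EE^\bullet$ and the sign computation $a\otimes b\mapsto -b\otimes a\mapsto a\otimes b$), a point the paper leaves implicit at this stage and only addresses later for the composite $\iota=D\circ\sigma$ using Lemma \ref{commute}; this extra care is a refinement of, not a departure from, the paper's argument.
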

    We are left to show the same holds for $\iota=D\circ \sigma$.
    \begin{lem}\label{commute}
        We claim that the the lifts of $\sigma, D$ both to $\mathbf{T}_\N$ and $ \mathbf{T}^{vir}_\N$ commute, i.e. $$D_*\circ \sigma_* = \sigma_* \circ D_* $$ 
    \end{lem}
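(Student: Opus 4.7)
The plan is to reduce both commutativity statements to the geometric fact that $D,\sigma\colon\N\to\N$ commute as self-maps of the moduli space, and then to verify the compatibility on the tangent and virtual tangent complexes by unwinding the definitions on explicit resolutions.

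First, I would check that $D\circ\sigma=\sigma\circ D$ as classifying maps $\N\to\N$. On a closed point $[\E_\phi]$, one has
\[
D(\sigma(\E_\phi))=\Extt^1(\sigma^{*}\E_\phi,\pi^{*}K_S^{-1}),\qquad \sigma(D(\E_\phi))=\sigma^{*}\Extt^1(\E_\phi,\pi^{*}K_S^{-1}),
\]
and these are canonically identified via the natural isomorphism $\sigma^{*}\Extt^1(\E_\phi,\pi^{*}K_S^{-1})\cong \Extt^1(\sigma^{*}\E_\phi,\sigma^{*}\pi^{*}K_S^{-1})$ together with $\pi\circ\sigma=\pi$ (so that $\sigma^{*}\pi^{*}K_S^{-1}=\pi^{*}K_S^{-1}$). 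Equivalently, in families with a chosen resolution $\EE^{\bullet}$, the operations $\EE^{\bullet}\mapsto\sigma^{*}\EE^{\bullet}$ and $\EE^{\bullet}\mapsto\EE^{\bullet,\vee}\otimes\pi^{*}K_S^{-1}$ visibly commute, since $\sigma$ preserves $\pi^{*}K_S^{-1}$ and commutes with linear duals. From $D\circ\sigma=\sigma\circ D$, the tangent-map commutation $D_{*}\circ\sigma_{*}=\sigma_{*}\circ D_{*}$ on $\mathbf{T}_\N$ follows by functoriality of cotangent/tangent complexes applied to this equality of self-maps.

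Next, I would check the analogue on $\mathbf{T}^{vir}_\N$ directly on the presentation $\RHomm_{p_X}(\EE,\EE)\cong \mathbf{R}p_{X,*}\bigl(\EE^{\bullet,\vee}\otimes^{L}\EE^{\bullet}\bigr)$. By Claim \ref{claim}, the same presentation for $\EE^D$ uses the resolution $\EE^{\bullet,\vee}\otimes\pi^{*}K_S^{-1}$, so after canceling the $\pi^{*}K_S^{\pm 1}$ factors, $D_{*}$ is represented termwise by the signed swap
\[
a\otimes b\ \longmapsto\ -b\otimes a\colon\ \EE^{\bullet,\vee}\otimes^{L}\EE^{\bullet}\ \longrightarrow\ \EE^{\bullet}\otimes^{L}\EE^{\bullet,\vee},
\]
while $\sigma_{*}$ acts termwise by the linearization $\sigma_{*}\colon\EE^{\bullet}\to\sigma^{*}\EE^{\bullet}$ of Def. \ref{sigmalinear} (and its dual on $\EE^{\bullet,\vee}$). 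Chasing an element in both orders yields
\[
(D_{*}\circ\sigma_{*})(a\otimes b)=-\sigma_{*}b\otimes\sigma_{*}a=(\sigma_{*}\circ D_{*})(a\otimes b),
\]
because the swap of tensor factors commutes strictly with the componentwise pullback $\sigma_{*}$. Applying $\mathbf{R}p_{X,*}[1]$ gives the commutation on $\mathbf{T}^{vir}_\N$.

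Finally, the two commutations are consistent with each other (and with the $\iota$-equivariance requirement of Def. \ref{equivdef}) through the Atiyah class, since Claims \ref{Deq} and \ref{sigmaeq} already show that $D_{*}$ and $\sigma_{*}$ individually intertwine $\At_{\EE,\N}$ with its $D$- and $\sigma$-pullbacks; pasting these two squares in either order produces the same outer rectangle. The main subtlety I anticipate is purely bookkeeping: tracking the canonical identifications $\sigma^{*}\pi^{*}K_S^{-1}\cong\pi^{*}K_S^{-1}$, $\EE^{\bullet,\vee,\vee}\cong\EE^{\bullet}$, and $\sigma^{*,2}\cong\id$ coherently so that the two composites $D_{*}\circ\sigma_{*}$ and $\sigma_{*}\circ D_{*}$ land in literally the same object rather than in canonically isomorphic ones. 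This is a strict-equality check rather than a higher coherence problem because $\sigma$ is a strict involution of $X$ and the dualization functor involved is a classical coherent $\Extt^1$.
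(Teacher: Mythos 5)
Your proposal is correct and follows essentially the same route as the paper: the commutation on $\mathbf{T}_\N$ is reduced to the (clear) commutation of $D$ and $\sigma$ on $\N$, and the virtual statement is checked by the same element chase showing that the signed swap $a\otimes b\mapsto -b\otimes a$ commutes with the componentwise map $\sigma_*$ on $\EE^{\bullet,\vee}\otimes^L\EE^\bullet$, followed by applying $\mathbf{R}p_{X,*}$. Your extra care with the identifications $\sigma^*\pi^*K_S^{-1}\cong\pi^*K_S^{-1}$ and $\EE^{\bullet,\vee,\vee}\cong\EE^\bullet$ is a welcome elaboration of points the paper leaves implicit, but it does not change the argument.
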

    \begin{proof}
        This is clear for the action on $\mathbf{T}_\N$. For the virtual action, we claim that the following diagram commutes:
        \begin{equation} 
	       \begin{tikzcd}
	       [column sep=14ex]
	       \EE^{\bullet,\vee}\otimes^L \EE^\bullet \arrow[d,"-(\_)^{\vee}"] \arrow[r,"\sigma_*"] &\sigma^*\EE^{\bullet,\vee}\otimes^L \sigma^*\EE^\bullet \arrow[d,"-(\_)^{\vee}"] \\
	       \EE^{\bullet}\otimes^L \EE^{\bullet,\vee} \arrow[r,"\sigma_*"] & \sigma^*\EE^{\bullet} \otimes^L \sigma^*\EE^{\bullet,\vee}
	       \end{tikzcd}
        \end{equation}	
        Indeed, following the RHS gives $$a\otimes b \mapsto \sigma_*a\otimes \sigma_* b\mapsto -\sigma_*b \otimes \sigma_*a =\sigma_*(-(a\otimes b)^\mathfrak{t}),$$
        which agrees the LHS composition.
        The statement follows by applying $\mathbf{R}p_{X,*}$ to the diagram.
    \end{proof}
    \begin{dfn}
    We define the virtual action of $\iota$ to be $$\iota_*= D_*\circ\sigma_*: \RHomm_{p_X}(\EE,\EE)[1] \rightarrow \RHomm_{p_X}(\iota^*\EE,\iota^*\EE)[1]$$
    \end{dfn}
    \begin{prop}
        $\At_{\EE,\N}$ is $\iota$-equivariant in the above sense.
    \end{prop}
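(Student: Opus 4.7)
The plan is to reduce $\iota$-equivariance of $\At_{\EE,\N}$ to the two equivariances already established in the corollary preceding Lemma~\ref{commute} (namely, $\At_{\EE,\N}$ is $\sigma$- and $D$-equivariant individually), together with the commutation of Lemma~\ref{commute}. The definition~\ref{equivdef} demands two things: a commutative square relating $\iota_*$ at tangent and virtual-tangent level via Atiyah classes, and a triangle condition expressing $\iota^{*,2} = \id$ consistently with the virtual action.

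First, for the square, I would paste the square of Claim~\ref{sigmaeq} with the $\sigma^*$-pullback of the square of Claim~\ref{Deq}:
\begin{equation*}
\begin{tikzcd}[column sep=7ex]
\RHomm_{p_X}(\EE,\EE)[1] \arrow[r,"\sigma_*"] & \RHomm_{p_X}(\sigma^*\EE,\sigma^*\EE)[1] \arrow[r,"D_*"] & \RHomm_{p_X}(\iota^*\EE,\iota^*\EE)[1] \\
\T_\N \arrow[u,"\At_{\EE,\N}"] \arrow[r,"\sigma_*"] & \sigma^*\T_\N \arrow[u,"\sigma^*\At_{\EE,\N}"] \arrow[r,"\sigma^*D_*"] & \iota^*\T_\N \arrow[u,"\iota^*\At_{\EE,\N}"].
\end{tikzcd}
\end{equation*}
The left square is Claim~\ref{sigmaeq}, the right square is Claim~\ref{Deq} pulled back by $\sigma^*$, and the outer rectangle is precisely the square demanded by Def.~\ref{equivdef} for $\iota_* = D_* \circ \sigma_*$.

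Second, for the triangle condition, I need $\iota^*\iota_* \circ \iota_* = \id$ both on $\T_\N$ and on $\RHomm_{p_X}(\EE,\EE)[1]$, matched under $\At_{\EE,\N}$. The tangent level is immediate from $\iota^2 = \id$ on $\N$. For the virtual level, I would expand
\[
\iota^*\iota_* \circ \iota_* \;=\; (\iota^* D_*)\,(\iota^* \sigma_*)\,(D_*)\,(\sigma_*),
\]
use Lemma~\ref{commute} to transport $D_*$ past $\iota^*\sigma_*$, and thereby factor the composition into $(\iota^* D_*)(D_*)$ on the dualizing slot and $(\iota^* \sigma_*)(\sigma_*)$ on the $\sigma$-slot. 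Each squares to the identity: the $D$-slot because bundles are reflexive (so $D^2 = \id$ on $\N$) and because the swap-with-sign $a \otimes b \mapsto -b \otimes a$ applied twice brings back $a \otimes b$ (the two signs cancel); the $\sigma$-slot because $\sigma^2 = \id$ on $X$ and the linearization $\sigma_*$ of Def.~\ref{sigmalinear} squares to the canonical identification $\sigma^{*,2}\EE \cong \EE$. Compatibility with the Atiyah class is then obtained by pasting the corresponding triangles for $\sigma$ and $D$ individually, each of which is guaranteed by the fact that $\At$ is functorial in pullbacks of sheaves.

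The main obstacle I expect is the bookkeeping in the second step: tracking the canonical identifications $\sigma^{*,2}\EE \cong \EE$ and $\EE^{D,D} \cong \EE$ carefully so that no spurious sign or shift survives, and verifying that under these canonical identifications the composite $\iota_*^2$ is literally the identity, not merely the identity up to a scalar. Lemma~\ref{commute} does the essential work here, since without being able to reorder $D_*$ and $\sigma_*$ inside the iterated composition one could not isolate the two $D$-applications (responsible for the cancelling signs) from the two $\sigma$-applications (responsible for the cancelling pullbacks on $X$).
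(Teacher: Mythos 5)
Your proposal is correct and follows essentially the same route as the paper: decompose $\iota_* = D_*\circ\sigma_*$, paste the equivariance squares of Claims~\ref{sigmaeq} and~\ref{Deq}, and use Lemma~\ref{commute} together with $D_*^2=\sigma_*^2=\id$ to get $\iota_*^2=\id$. You simply spell out the pasting and the sign/pullback cancellations in more detail than the paper's terse argument.
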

    \begin{proof}
        As we have shown the equivariance for both $D_*$ and $\sigma_*$, 
        it is  clear that both actions $\iota_*$ - the one on $\mathbf{T}_\N$ and the one on $\RHomm_{p_X}(\EE,\EE)[1]$ - are compatible via $\At_{\EE,\N}$ and square to the identity, as $$\iota_*^2=(D_*\circ \sigma_*)^2=(D_*\circ \sigma \circ D_* \circ \sigma_*)^2=D_*^2\circ \sigma_*^2=\id,$$
        by \ref{commute}.
    \end{proof}


\section{The localization formula}
We define a virtual structure for $Sp(n)$-Higgs bundles by identifying them as fixed points $$\N_{Sp(r)}\hookrightarrow \N^\iota \subset \N$$ and apply equivariant localization as in \cite{GP} and \cite{Sch}. This requires
\begin{itemize}
    \item a $\iota$-equivariant embedding $\N\subset \curly{A}$ into a smooth ambient space.
    \item a $\iota$-equivariant Atiyah class $\At_{\EE,\N}$ represented by complexes.
\end{itemize}
\subsection{Smooth ambient space}
We remark that $\N$ admits an embedding into a smooth ambient space $\N \subset \curly{A}$ that is compatible with $\iota$, see \cite[1.2]{Sch}.\\
If $ \curly{I} \subset \OO_\curly{A}$ denotes the ideal sheaf of this embedding, then this gives a $2$-term equivariant representation of Illusie's truncated cotangent complex $$\mathbf{L}_\N=[\curly{I}/\curly{I}^2 \rightarrow \Omega_{\curly{A}}|_{\N}] \in \mathbf{D}^{[-1,0]}({\N})^{\langle \iota \rangle},$$
see \cite[3.2]{R}.
\subsection{$\iota$-equivariant representation.}
From the previous section after taking duals, we have an equivariant obstruction theory on $\N$ given by $$(\At_{\EE,\N}: \RHomm_{p_X}(\EE,\EE)[2] \rightarrow \mathbf{L}_\N )\in \mathbf{D}^b(\N)^{\langle \iota \rangle}.$$
This obstruction theory can be made perfect, i.e. there is a $2$-term representation of locally frees
$$V^\bullet=[V^{-1}\rightarrow V^0] \rightarrow \tau^{[-1,0]}\RHomm_{p_X}(\EE,\EE)[2],$$
which gives the $U(r)$-perfect obstruction theory on $\N$
$$V^\bullet \xrightarrow{\psi} \mathbf{L}_\N,$$
see \cite[3.18]{TT1}\\
By our previous considerations, we may choose $V^\bullet$ to be an equivariant representation with respect to $\iota$, which together with the equivariance of $\At_{\EE,\N}$ and representation of $\mathbf{L}_\N$ above gives a $\iota$-equivariant map of $2$-term complexes 
\begin{equation}
    ([V^{-1}\rightarrow V^0] \xrightarrow{\psi} [\curly{I}/\curly{I}^2 \rightarrow \Omega_{\curly{A}}|_\N]) \in \mathbf{D}^{[-1,0]}(\N)^{\langle \iota \rangle},
\end{equation}
where we refer to \cite[10.1]{Sch} for details of the equivariant representation.
\subsection{Equivariant localization.}
We adapt equivariant localization of \cite{GP}, replacing $\C^\times$ by $\langle \iota \rangle$: Restricting $V^\bullet$ to the component $ \N_{Sp(r)}\subset \N^\iota$ gives a $2$-term complex of $\OO[\langle \iota \rangle]$-modules $V^\bullet|_{\N_{Sp(r)}}$ and the action of $\iota$ gives a $\mathbf{Z}/2\mathbf{Z}$-grading $$V^\bullet|_{\N_{Sp(r)}}\cong V^{\bullet,\iota}\oplus V^{\bullet, mov}$$ into $\iota$-invariant part $V^{\bullet,\iota}$ and moving part $V^{\bullet, mov}$.
Restricting to $\N_{Sp(r)}$, there is a natural map $$\mathbf{L}_\N|^\iota_{\N_{Sp(r)}} \rightarrow \mathbf{L}_{\N_{Sp(r)}}.$$ The proof of the following is analogous to \cite[10.3.2]{Sch}, which works for any involution $\iota$ and a $\iota$-equivariant obstruction theory. This is inspired by the virtual $\C^\times$-localization of \cite[p.4-6]{GP}, see also \cite[4.1.41]{K}. We remark that taking fixed parts is exact (\cite[1.1.18]{Sch}) and that $$V^{\bullet,\iota}=[V^{-1,\iota}\rightarrow V^{0,\iota}]$$ is again a $2$-term complex of vector bundles. Furthermore, this gives us the virtual cotangent bundle for $\N_{Sp(r)}$ according to the following proposition:
\begin{prop}\label{finalthm}
The composition $$V^{\bullet,\iota} \xrightarrow{\psi^\iota} \mathbf{L}_\N|^\iota_{\N_{Sp(r)}} \rightarrow \mathbf{L}_{\N_{Sp(r)}}$$
defines a perfect obstruction theory on $\N_{Sp(r)}$, equipping it with a virtual fundamental class $[\N_{Sp(r)}]^{vir}$.
\end{prop}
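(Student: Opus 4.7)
The plan is to adapt the virtual localization formula of Graber–Pandharipande to the finite group $\langle \iota \rangle$, treating the situation as an involution analog of the torus case of \cite{GP}. Throughout I would work with the $\iota$-equivariant $2$-term representation $\psi: V^\bullet \to [\curly{I}/\curly{I}^2 \to \Omega_\curly{A}|_\N]$ of the obstruction theory on $\N$ and restrict everything to $\N_{Sp(r)} \subset \N^\iota$. The crucial input is that, since $\iota$ has order $2$ and we work over $\C$, the fixed-part functor $(\_)^\iota$ is exact and commutes with taking cohomology, by \cite[1.1.18]{Sch}; this replaces the role of the weight decomposition in the torus-localization case.

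First I would observe that the $\iota$-equivariant embedding $\N \subset \curly{A}$ restricts to a closed embedding $\N_{Sp(r)} \hookrightarrow \curly{A}^\iota$, with $\curly{A}^\iota$ itself smooth since fixed loci of finite-order automorphisms of smooth schemes in characteristic zero are smooth. Writing $\curly{I}_{Sp(r)}$ for the ideal of $\N_{Sp(r)}$ inside $\curly{A}^\iota$, this gives an explicit representation
\[
\mathbf{L}_{\N_{Sp(r)}} \;\cong\; [\curly{I}_{Sp(r)}/\curly{I}_{Sp(r)}^2 \to \Omega_{\curly{A}^\iota}|_{\N_{Sp(r)}}].
\]
Taking $\iota$-fixed parts of $[\curly{I}/\curly{I}^2 \to \Omega_\curly{A}|_\N]|_{\N_{Sp(r)}}$ and using the canonical $\pm 1$ eigenspace decompositions, I would identify the natural map $\mathbf{L}_\N|^\iota_{\N_{Sp(r)}} \to \mathbf{L}_{\N_{Sp(r)}}$ and verify that it is a quasi-isomorphism by a direct diagram chase: the fixed part of $\Omega_\curly{A}|_{\N_{Sp(r)}}$ is $\Omega_{\curly{A}^\iota}|_{\N_{Sp(r)}}$, and the moving part is absorbed into the conormal $\curly{N}^\vee_{\curly{A}^\iota/\curly{A}}|_{\N_{Sp(r)}}$.

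Next, I would apply $(\_)^\iota$ to $\psi|_{\N_{Sp(r)}}$: exactness preserves the two-term structure and produces $\psi^\iota: V^{\bullet,\iota} \to \mathbf{L}_\N|^\iota_{\N_{Sp(r)}}$, a morphism of $2$-term complexes of locally free sheaves. The Behrend–Fantechi criterion demands that $h^0$ be an isomorphism and $h^{-1}$ be surjective. Since both properties hold for $\psi$ on $\N$ and $(\_)^\iota$ is exact, they descend to $\psi^\iota$ landing in $\mathbf{L}_\N|^\iota_{\N_{Sp(r)}}$; composing with the quasi-isomorphism from the previous paragraph then yields the desired perfect obstruction theory on $\N_{Sp(r)}$, and the virtual fundamental class $[\N_{Sp(r)}]^{vir}$ follows from the standard cone construction.

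The main obstacle I expect is the identification of $\mathbf{L}_\N|^\iota_{\N_{Sp(r)}} \to \mathbf{L}_{\N_{Sp(r)}}$ as a quasi-isomorphism, since this requires a careful comparison of the conormal of $\N$ inside $\curly{A}$ with the conormal of $\N_{Sp(r)}$ inside $\curly{A}^\iota$, and one has to verify that the moving part of $\curly{I}/\curly{I}^2$ contributes exactly the conormal of the fixed ambient space. This is the direct analogue of \cite[10.3.2]{Sch} with $\C^\times$ replaced by $\langle \iota \rangle$, and it rests on the smoothness of $\curly{A}^\iota$ together with the fact that the $\iota$-action on $\OO_{\N_{Sp(r)}}$ is trivial, so that fixed parts can be computed locally in terms of the $\iota$-eigenspace decomposition.
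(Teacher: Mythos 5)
Your skeleton — the $\iota$-equivariant embedding $\N\subset\curly{A}$, exactness of the fixed-part functor, the two-term representations, and the Behrend--Fantechi criterion — is the same route the paper takes (the paper itself only defers to \cite[10.3.2]{Sch}, \cite{GP} and \cite[4.1.41]{K} rather than writing the argument out). But there is a genuine flaw at your central step: the comparison map $\mathbf{L}_\N|^\iota_{\N_{Sp(r)}}\rightarrow\mathbf{L}_{\N_{Sp(r)}}$ is in general \emph{not} a quasi-isomorphism, and no diagram chase will make it one. In the representation $[\curly{I}/\curly{I}^2\rightarrow\Omega_{\curly{A}}|_\N]$ the fixed part of $\Omega_{\curly{A}}|_{\N_{Sp(r)}}$ is indeed $\Omega_{\curly{A}^\iota}|_{\N_{Sp(r)}}$, but the fixed part of $\curly{I}/\curly{I}^2|_{\N_{Sp(r)}}$ only \emph{surjects} onto $\curly{I}_{Sp(r)}/\curly{I}_{Sp(r)}^2$; its kernel is not ``absorbed'' by the conormal of $\curly{A}^\iota\subset\curly{A}$, which lives entirely in the moving part. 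A toy example already kills the claim: take $\curly{A}=\affi^1$ with $\iota(x)=-x$ and $\N=\{x^4=0\}$. Then $\N^\iota$ is the reduced origin, equal to $\curly{A}^\iota$, so $\mathbf{L}_{\N^\iota}\simeq 0$; but the generator $x^4$ of $\curly{I}/\curly{I}^2$ is $\iota$-fixed while $dx$ is moving, so $h^{-1}\bigl(\mathbf{L}_\N|^\iota_{\N^\iota}\bigr)\neq 0$. In general the comparison map is an isomorphism on $h^0$ and a surjection on $h^{-1}$, and nothing more.

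That weaker statement is exactly what rescues the proposition, and it is what \cite[10.3.2]{Sch} (following \cite[p.4-6]{GP}) proves: since the second arrow itself satisfies the Behrend--Fantechi conditions, the composition $V^{\bullet,\iota}\rightarrow\mathbf{L}_\N|^\iota_{\N_{Sp(r)}}\rightarrow\mathbf{L}_{\N_{Sp(r)}}$ is an obstruction theory even though the middle map is not an equivalence; perfection is then your (correct) remark that $V^{\bullet,\iota}$ is again a two-term complex of bundles. A second, smaller gap: you argue that the conditions ``$h^0$ iso, $h^{-1}$ surjective'' descend to $\psi^\iota$ because $(\_)^\iota$ is exact, but you first restrict $\psi$ to $\N_{Sp(r)}$, and restriction is not exact. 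The standard fix is to phrase the criterion via the mapping cone of $\psi$, which is concentrated in degrees $\le -2$; that amplitude bound is preserved by derived pullback to $\N_{Sp(r)}$ and then by taking fixed parts. With these two corrections your sketch becomes precisely the argument of \cite[10.3.2]{Sch} that the paper invokes.
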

\section{Appendix}
\subsection{Dual bundles}
    Let $A,B$ be vector bundles over a variety $S$. Let $A^*=\Homm(A,\OO)$ be the bundle of dual sections. 
    Under the natural isomorphism $A^*\otimes B \xrightarrow{\sim} \Homm(A,B)$, given by $\sigma \otimes b \mapsto \sigma\lrcorner\otimes b  : a \mapsto \sigma(a)b$, taking duals $\Homm(A,B) \rightarrow \Homm(B^*,A^*), \phi\mapsto \phi^*$ is the same thing as swapping the factors $\sigma\otimes b \mapsto b \otimes \sigma$ where a section $b$ of  $B$ acts on a section $\tau $ of $B^*$ via $ev:B\xrightarrow{\sim} B^{**}$. \\
    More precisely, the following diagram 
    \begin{equation*}
        \begin{tikzcd}
        A^*\otimes B \arrow[d,"\mathrm{swap}"] \arrow[r,"\cong"]& \Homm(A,B) \arrow[d,"f\mapsto f^*"] \\
        B\otimes A^* \arrow[r,"\cong"] & \Homm(B^*,A^*)
        \end{tikzcd} 
    \end{equation*}
    commutes.
    \begin{proof}
        Let $a, \tau$ be sections of $A, B^*$ respectively.\\
        As discussed above, following $\sigma \otimes b$ down the LHS gives the map  $$b\lrcorner\otimes \sigma : \tau \mapsto [a \mapsto \tau(b)\sigma (a)].$$
        On the RHS, we get the map $\tau \mapsto (\sigma \lrcorner \otimes b)^* \tau$, given by 
        $$\tau \mapsto [a \mapsto \tau \circ (\sigma\lrcorner \otimes b)(a)=\tau(\sigma(a)b)=\sigma(a)\tau(b)].$$ 
    \end{proof}
        In the case that we have $B=A^*$, swapping the factors $\sigma_1 \otimes \sigma_2\mapsto \sigma_2\otimes\sigma_1$ defines an involution on $\Homm(A,A^*)=A^*\otimes A^*$, splitting into $\pm 1$ eigensheaves $$\curly{S}ym^2(A^*)\oplus \wedge^2A^*.$$ Sections of the former are symmetric (self-dual) maps $\sigma_1\lrcorner\otimes \sigma_2 = \sigma_2\lrcorner\otimes \sigma_1$ and sections of the latter anti-symmetric (skew) $\sigma_1 \lrcorner \otimes \sigma_2=-\sigma_2\lrcorner \otimes \sigma_1$. We remark:
    \begin{rmk}\label{skew}
        A map $\sigma_1 \lrcorner \otimes \sigma_2: A \rightarrow A^*$ is skew if and only if $$\sigma_1(a)\sigma_2(b)=-\sigma_1(b)\sigma_2(a).$$
    \end{rmk}
\subsection{Hilbert polynomials}
    Let $E$ be a coherent sheaf on a smooth projective surface $S$. Let $\OO(1)$ denote a polarisation on $S$ with $H:=c_1(\OO(1))$. Fixing invariants $\ch(E)=(r,c_1(E),c_2(S))$, we set $\deg(E):=\deg(c_1(E).H)$. The Hilbert polynomial $P_E$ of $E$ is given by
    \begin{align}\label{HP}
       P_E(n)= &r\chi(\OO_S)+\frac{1}{2}(c_1(E)c_1(S)+c_1(E)^2-2c_2(E))\\
       \notag &+\frac{rnH}{2}(c_1(S)+2c_1(E))+\frac{r(r+1)}{2}n^2H^2))
    \end{align}

\subsection{Orthosymplectic bundles}\label{orthosymplectic}
    \begin{dfn}
        We call a pair $(E,q)$ consisting of a rank-$r$ vector bundle $E$ over $S$ with a symmetric isomorphism $$q:E \xrightarrow{\sim} E^*$$ an orthogonal bundle.\\
        We call a pair $(E,\omega)$ a symplectic bundle, if we replace $q$ by a skew-symmetric isomorphism $$\omega: E\xrightarrow{\sim} E^*.$$
        We denote the associated pairing in both cases as $e \mapsto e^*=(e_,\_)$.
    \end{dfn}
    \begin{rmk}
        If $E=\Lambda\oplus \Lambda^*$ where $\Lambda \subset E$ is a $\rk$-n sub-bundle, then it admits both a natural symmetric $q$ and skew pairing $\omega$.\\
         Here, the symmetric pairing $$q:\Lambda\oplus \Lambda^* \times \Lambda^* \oplus \Lambda \rightarrow \OO$$ 
        is given by $$((a,\sigma),(\sigma',b)) \mapsto \sigma'(a)+\sigma(b)$$
        and we define $\omega$ by replacing $"+"$ by $"-"$ on the RHS.
    \end{rmk}
    \subsubsection{Orthogonal is skew}
    We remark that we have a splitting $$\Homm(E,E^*)\cong \curly{S}ym^2(E^*)\oplus \wedge^2E^*$$ into symmetric maps $f=f^*$ and skew maps $f=-f^*$.
    \begin{dfn}
        Fixing an orthogonal bundle $(E,q)$, we define the orthogonal group $O(E)\subset GL(E)$ to be those invertible maps $f$ such that $$(f(a),f(b))=(a,b)$$ holds. 
    \end{dfn}
        Accordingly, we define $$\mathfrak{so}(E) \subset \Endd(E)$$ to be the bundle of Lie algebras consisting of all endomorphisms $\phi$ with $$(\phi(a),b)+(a,\phi(b))=0.$$\\
        Writing $\phi=\sigma \lrcorner\otimes e $, this means that $$(\phi(a),b)=\sigma(a) e^*(b)$$ is equal to  $$-(a,\phi(b))=-(\phi(b),a)=-\sigma(b)e^*(a).$$  
    \begin{cor}\label{orthogonalisskew}
        Let $(E,q)$ be an orthogonal bundle. Then the isomorphism $$ q_*: \Endd(E) \xrightarrow{\sim} \Homm(E,E^*),$$
        sending $$\sigma \lrcorner\otimes e \mapsto \sigma\lrcorner \otimes e^*,$$ identifies $$\mathfrak{so}(E) \cong \wedge^2E^*.$$
    \end{cor}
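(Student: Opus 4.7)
The strategy is to work locally and directly match the defining conditions: an endomorphism lies in $\mathfrak{so}(E)$ precisely when its image under $q_*$ satisfies the skew-symmetry criterion characterized in Remark \ref{skew}. I would proceed as follows.

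First I would verify that $q_*$ is indeed an isomorphism of bundles. This is immediate: the orthogonal structure $q:E \xrightarrow{\sim} E^*$ induces $\Endd(E) = E^* \otimes E \xrightarrow{\id \otimes q} E^* \otimes E^* = \Homm(E, E^*)$, and on decomposable sections this sends $\sigma \lrcorner \otimes e$ to $\sigma \lrcorner \otimes q(e) = \sigma \lrcorner \otimes e^*$, matching the formula in the statement.

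Next, I would characterize the image of $\mathfrak{so}(E)$. Take $\phi = \sigma \lrcorner \otimes e$; the computation already in the text shows that $\phi \in \mathfrak{so}(E)$ if and only if
\[
\sigma(a)\, e^*(b) = -\sigma(b)\, e^*(a)
\]
for all local sections $a,b$ of $E$. By Remark \ref{skew} applied to $\sigma_1 = \sigma$ and $\sigma_2 = e^*$, this is precisely the condition that $q_*(\phi) = \sigma \lrcorner \otimes e^*$ be a skew-symmetric map $E \to E^*$, i.e.\ a section of $\wedge^2 E^*$. So $q_*$ carries $\mathfrak{so}(E)$ into $\wedge^2 E^*$.

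Finally, to get an isomorphism (and not merely an inclusion), I would run the argument in reverse: given any section $\sigma \lrcorner \otimes \tau$ of $\wedge^2 E^*$, use $q^{-1}: E^* \xrightarrow{\sim} E$ to write $\tau = e^*$ for a unique $e$; the same equivalence then shows $\sigma \lrcorner \otimes e \in \mathfrak{so}(E)$. Extending from decomposable sections by linearity, the restriction of $q_*$ to $\mathfrak{so}(E)$ is a bundle isomorphism onto $\wedge^2 E^*$, which is the claim. No step is really an obstacle here; the only point to be careful about is the symmetry of $q$, which is what ensures the pairing $(\phi(a),b) = \sigma(a) e^*(b)$ has no unwanted sign when one exchanges $a$ and $b$ through $q$.
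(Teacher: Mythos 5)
Your proposal is correct and follows essentially the same route as the paper: it combines the displayed computation that $\sigma\lrcorner\otimes e\in\mathfrak{so}(E)$ means $\sigma(a)e^*(b)=-\sigma(b)e^*(a)$ with the criterion of Remark \ref{skew} to identify the image as $\wedge^2E^*$. The extra remarks on surjectivity and linearity are fine but only make explicit what the paper leaves implicit.
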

    \begin{proof}
        By the above observation and Rmk. \ref{skew}, $\sigma \lrcorner\otimes e$ is a section of $\mathfrak{so}(E)$ if and only if $\sigma\lrcorner \otimes e^* $ is skew.
    \end{proof}
    \subsubsection{Symplectic is symmetric}
    \begin{dfn}
        Replacing the symmetric structure $q$ by a skew symmetric one $\omega:E\xrightarrow{\sim} E^*$, we have for a symplectic bundle $(E,\omega)$ the sub-sheaf of symplectic maps ${Sp}(E) \subset GL(E)$, i.e. those maps $f$ preserving $\omega$:
        $$(f(a),f(b))=(a,b).$$
        We denote the corresponding bundle of Lie algebras as $$\mathfrak{sp}(E) \subset \Endd(E)$$ satisfying again
        $$(\phi(a),b)+(a,\phi(b))=0.$$
    \end{dfn}
    \begin{cor} \label{symplecticissym}
        The map
       $$\omega_*: \Endd(E) \xrightarrow{\sim} \Homm(E,E^*)$$ identifies $$ \spp(E)  \cong \curly{S}ym^2(E^*).$$ 
    \end{cor}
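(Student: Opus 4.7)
The plan is to mirror the proof of Corollary \ref{orthogonalisskew}, exploiting the fact that the only structural difference between the orthogonal and symplectic cases is a single sign, coming from $\omega$ being skew rather than symmetric. That sign flip is exactly what converts the ``skew'' conclusion of the orthogonal case into the desired ``symmetric'' conclusion here.

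First, I would work locally and write a generic endomorphism as $\phi = \sigma \lrcorner \otimes e$, with $\phi(a) = \sigma(a)\, e$. Under $\omega_*$ this maps to $\sigma \lrcorner \otimes e^*$, where now $e^* = \omega(e)(\_) = (e,\_)$ is defined via the \emph{skew} pairing. Plugging $\phi$ into the $\spp(E)$-condition $(\phi(a),b) + (a,\phi(b)) = 0$ gives
\begin{equation*}
\sigma(a)\,(e,b) + \sigma(b)\,(a,e) = 0.
\end{equation*}
Using skew-symmetry of $\omega$ to rewrite $(a,e) = -(e,a) = -e^*(a)$, this identity becomes
\begin{equation*}
\sigma(a)\, e^*(b) = \sigma(b)\, e^*(a).
\end{equation*}

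Next, I would invoke the symmetric analogue of Remark \ref{skew} (which is immediate from the splitting $A^* \otimes A^* = \curly{S}ym^2(A^*) \oplus \wedge^2 A^*$ recalled in Appendix~9.1): a section $\sigma_1 \lrcorner \otimes \sigma_2$ of $\Homm(E,E^*)$ is symmetric precisely when $\sigma_1(a)\sigma_2(b) = \sigma_1(b)\sigma_2(a)$. Applied to $\sigma \lrcorner \otimes e^*$, the identity displayed above is exactly this condition, so $\omega_*(\phi)$ lies in $\curly{S}ym^2(E^*)$ if and only if $\phi \in \spp(E)$. Since $\omega_*$ is already known to be an isomorphism $\Endd(E) \xrightarrow{\sim} \Homm(E,E^*)$, this identifies $\spp(E)$ with $\curly{S}ym^2(E^*)$ as claimed.

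There is no real obstacle: the argument is formally identical to the orthogonal case, and the only point requiring mild care is making sure the sign coming from $(a,e) = -(e,a)$ is tracked correctly, since it is precisely that sign that converts the $\spp$-defining relation into a symmetry (rather than skew-symmetry) condition on $\omega_*(\phi)$.
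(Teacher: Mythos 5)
Your proposal is correct and follows essentially the same route as the paper's own proof: write $\phi=\sigma\lrcorner\otimes e$, use the skew-symmetry of $\omega$ to flip the sign in the relation $(\phi(a),b)+(a,\phi(b))=0$, and conclude that $\omega_*(\phi)=\sigma\lrcorner\otimes e^*$ satisfies the symmetry condition $\sigma(a)e^*(b)=\sigma(b)e^*(a)$, i.e.\ lies in $\curly{S}ym^2(E^*)$. The only cosmetic difference is that you cite the symmetric analogue of Remark \ref{skew} explicitly, which the paper leaves implicit.
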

    \begin{proof}
        An endomorphism $\phi=\sigma\lrcorner \otimes e $ is in $\spp(E)$ if $$(\phi(a),b)=\sigma(a) e^*(b)$$ is equal to  $$-(a,\phi(b))=(\phi(b),a)=\sigma(b)e^*(a),$$ 
        so $\sigma\lrcorner\otimes e $ is in $\mathfrak{sp}(E)$ if and only $\sigma \lrcorner \otimes e^*$ is symmetric. 
    \end{proof}


\begin{thebibliography}{[K-K-M]}

\bibitem[B]{B} C. Bu, {\em Orthosymplectic Donaldson–Thomas theory}, (2025),
\href{https://arxiv.org/abs/2503.20667}{arXiv:2503.20667}.

\bibitem[BF]{BF} K. Behrend and B. Fantechi, {\em The intrinsic normal cone}, 
Invent. Math. {\bf 128} (1997), 45–88,
\href{https://arxiv.org/abs/alg-geom/9601010}{arXiv:alg-geom/9601010}.

\bibitem[GT]{GT} A. Gholampour and R. Thomas, {\em Degeneracy loci, virtual cycles and nested Hilbert schemes}, 
Tunis. J. Math. {\bf 2} (2020), 633–665,
\href{https://arxiv.org/abs/1902.04128}{arXiv:1902.04128}.

\bibitem[GK]{GK} L. Göttsche and M. Kool, {\em Virtual refinements of the Vafa–Witten formula}, 
Commun. Math. Phys. {\bf 376} (2020), 1–49,
\href{https://arxiv.org/abs/1703.07196}{arXiv:1703.07196}.

\bibitem[GKL]{GKL} L. Göttsche, M. Kool and T. Laarakker, {\em SU(r) Vafa–Witten invariants, Ramanujan’s continued fractions, and cosmic strings}, 
Michigan Math. J. {\bf 75} (2024), 3–63,
\href{https://arxiv.org/abs/2108.13413}{arXiv:2108.13413}.

\bibitem[GS]{GS} T. Gómez and I. Sols, {\em Moduli space of principal sheaves over projective varieties}, 
Ann. of Math. {\bf 161} (2005), 1037–1092.

\bibitem[GP]{GP} T. Graber and R. Pandharipande, {\em Localization of virtual classes}, 
Invent. Math. {\bf 135} (1999), 487–518,
\href{https://arxiv.org/abs/alg-geom/9708001}{arXiv:alg-geom/9708001}.

\bibitem[H]{H} N. Hitchin, {\em Stable bundles and integrable systems}, 
Duke Math. J. {\bf 54} (1987), 91–113.

\bibitem[HL]{HL} D. Huybrechts and M. Lehn, {\em The geometry of moduli spaces of sheaves}, 
2nd ed., Cambridge Univ. Press, 2010.

\bibitem[HT]{HT} D. Huybrechts and R. Thomas, {\em Deformation–obstruction theory for complexes via Atiyah and Kodaira–Spencer classes}, 
Math. Ann. {\bf 346} (2010), 545–569,
\href{https://arxiv.org/abs/0805.3527}{arXiv:0805.3527}.

\bibitem[JT]{JT} Y. Jiang and R. Thomas, {\em Virtual signed Euler characteristics}, 
J. Algebraic Geom. {\bf 26} (2017), 379–397,
\href{https://arxiv.org/abs/0805.3527}{arXiv:0805.3527}.

\bibitem[K]{K} Y. Kiem (ed.), {\em Moduli spaces, virtual invariants and shifted symplectic structures}, 
KIAS Springer Ser. Math., Springer, 2025.

\bibitem[L]{L} H. Liu, {\em Semistable refined Vafa–Witten invariants}, (2023),
\href{https://arxiv.org/abs/2309.03673}{arXiv:2309.03673}.

\bibitem[O]{O} G. Oberdieck, {\em Curve counting on the Enriques surface and the Klemm–Mariño formula}, 
to appear in J. Eur. Math. Soc. (2024),
\href{https://arxiv.org/abs/2305.11115}{arXiv:2305.11115}.

\bibitem[OT]{OT} J. Oh and R. Thomas, {\em Counting sheaves on Calabi–Yau 4–folds I}, 
Duke Math. J. {\bf 172} (2023), 1333–1409,
\href{https://arxiv.org/abs/2009.05542}{arXiv:2009.05542}.

\bibitem[R]{R} A. Ricolfi, {\em The equivariant Atiyah class}, (2020),
\href{https://arxiv.org/abs/2003.05440}{arXiv:2003.05440}.

\bibitem[Sch]{Sch} S. Schirren, {\em A perfect obstruction theory for $SU(2)$–Higgs sheaves}, (2024),
\href{https://arxiv.org/abs/2403.16625}{arXiv:2403.16625}.

\bibitem[Schw]{Schw} K. Schwede, {\em Generalized divisors and reflexive sheaves}, 
available at \href{https://faculty.math.illinois.edu/~kschwede/ReflexiveModules.pdf}{PDF}, 
accessed Oct. 20, 2025.

\bibitem[SP]{SP} The Stacks Project Authors, {\em The Stacks Project}, (accessed Oct. 20, 2025),
\href{https://stacks.math.columbia.edu/tag/0B3N}{tag/0B3N}.

\bibitem[STV]{STV} T. Schürg, B. Toën and G. Vezzosi, {\em Derived algebraic geometry, determinants of perfect complexes, and applications to obstruction theories for maps and complexes}, 
J. Reine Angew. Math. {\bf 702} (2015), 1–40,
\href{https://arxiv.org/abs/1102.1150}{arXiv:1102.1150}.

\bibitem[TT1]{TT1} Y. Tanaka and R. Thomas, {\em Vafa–Witten invariants for projective surfaces I}, 
J. Algebraic Geom. {\bf 29} (2020), 603–668,
\href{https://arxiv.org/abs/1702.08487}{arXiv:1702.08487}.

\bibitem[TT2]{TT2} Y. Tanaka and R. Thomas, {\em Vafa–Witten invariants for projective surfaces II}, 
Pure Appl. Math. Q. {\bf 13} (2017), 517–562,
\href{https://arxiv.org/abs/1702.08488}{arXiv:1702.08488}.

\bibitem[VW]{VW} C. Vafa and E. Witten, {\em A strong coupling test of S–duality}, 
Nucl. Phys. B {\bf 432} (1994), 484–550.

\end{thebibliography}
\end{document}